\documentclass[reqno,11pt]{amsart}
\voffset=-1.75cm \hoffset=-1.75cm \textheight=24.0cm \textwidth=16.2cm
\usepackage{amsmath, amsfonts,amsthm,amssymb,amsbsy,upref,color,graphicx,amscd,hyperref,enumerate}
\usepackage[active]{srcltx}
\usepackage[latin1]{inputenc}
\usepackage{bbm}
\usepackage{pgf}
\usepackage{xfrac}

\def\ds{\displaystyle}
\def\eps{{\varepsilon}}
\def\N{\mathbb{N}}

\def\R{\mathbb{R}}

\def\HH{\mathcal{H}}

\def\KK{\mathcal{K}}

\def\W{\widetilde W}

\newcommand{\be}{\begin{equation}}
\newcommand{\ee}{\end{equation}}

\newcommand{\de}{\partial}
\newcommand{\dist}{{\rm {dist}}}

\newcommand{\reg}{{\rm Reg}}
\newcommand{\sing}{{\rm Sing}}

\theoremstyle{plain}
\newtheorem{theo}{Theorem}

\newtheorem{exam}{Example}

\numberwithin{equation}{section}
\theoremstyle{plain}
\newtheorem{teo}{Theorem}[section]
\newtheorem{lemma}[teo]{Lemma}

\newtheorem{prop}[teo]{Proposition}

\theoremstyle{remark}
\newtheorem{oss}[teo]{Remark}

\title[A logarithmic epiperimetric inequality for the obstacle problem]{A logarithmic epiperimetric inequality for the obstacle problem}

\author{Maria Colombo, Luca Spolaor, Bozhidar Velichkov}

\address {Maria Colombo: \newline \indent
Institute for Theoretical Studies, ETH Z\"urich,
	\newline \indent
 Clausiusstrasse 47, CH-8092 Z\"urich, Switzerland
 	}
\email{maria.colombo@eth-its.ethz.ch}

\address {Luca Spolaor: \newline \indent
	Massachusetts Institute of Technology (MIT), 
	\newline \indent
	77 Massachusetts Avenue, Cambridge 
	MA 02139, USA}
\email{lspolaor@mit.edu}

\address {Bozhidar Velichkov: \newline \indent
Laboratoire Jean Kuntzmann (LJK), Universit\'e Grenoble Alpes
\newline \indent
B\^atiment IMAG, 700 Avenue Centrale, 38401 Saint-Martin-d'H\`eres}
\email{bozhidar.velichkov@univ-grenoble-alpes.fr}

\makeindex

\begin{document}


\begin{abstract}
For the general obstacle problem, we prove by direct methods an epiperimetric inequality at regular and singular points, thus answering a question of Weiss (Invent.
Math., 138 (1999), 23--50). In particular at singular points we introduce a new tool, which we call logarithmic epiperimetric inequality, which yields an explicit logarithmic modulus of continuity on the $C^1$ regularity of the singular set, thus improving previous results of Caffarelli and Monneau \cite{Caff3,Caff_rev,Monneau}.
\end{abstract}

\maketitle

\textbf{Keywords:} epiperimetric inequality, monotonicity formula, obstacle problem, free boundary, singular points


\section{Introduction}	

In this paper we study the regularity of the free-boundary of nonnegative local minimizers $u$ of the functional 
$$
\mathcal E(u):=\int |\nabla u|^2\,dx+\int \max\{u(x),0\}\,dx\,.
$$
Our main result is a logarithmic epiperimetric inequality, which is a new tool for the study of the singular set of minimizers of variational energies. It is also an alternative approach to the  regularity of the singular free boundary as proposed by Caffarelli \cite{Caff3,Caff_rev}. Before we state it we recall that, given $u\in H^1(B_1)$, the \emph{Weiss' boundary adjusted energy of $u$} is defined by
$$
W(u)=\int_{B_1}|\nabla u|^2\,dx-2\int_{\partial B_1}u^2\,d\HH^{d-1}+\int_{B_1}\max\{u(x),0\}\,dx\,.
$$
The class $\KK$ of admissible blow-ups of $u$ at singular points is defined by
\begin{equation}
\label{defn:K}
\KK:=\{Q_A \colon \R^d \to \R \,:\, Q_A(x) = x \cdot Ax,\,\text{ $A$  symmetric non-negative with }{\rm{tr}} A = \sfrac14\}\,. 
\end{equation}
{The energy $W$ is constant on $\KK$, precisely we have $ W(Q_A)=\frac{\omega_d}{8\, d (d+2)}$, for every $Q_A\in \KK$. We refer to this constant as to the \emph{energy density at the singular points} and denote it by $\Theta$.}

\begin{theo}[Logarithmic epiperimetric inequality at singular points]\label{t:epi:B}
There are dimensional constants $\delta>0$ and $\eps>0$ such that the following claim holds. For every non-negative function $c\in H^1(\partial B_1)$, with $2$-homogeneous extension $z$ on $B_1$, satisfying   
$$\text{dist}_{L^2(\partial B_1)}\left(c,\KK\right)\le \delta\qquad\text{and}\qquad 0\le W(z)-\Theta\le 1,$$
{\flushright there is a non-negative function $h\in H^1(B_1)$ with $h=c$ on $\partial B_1$ satisfying the inequality }
\begin{equation}\label{e:epi_flat_point_sing}
W(h)\le W(z)-\eps\big(W(z)-\Theta\big)^{1+\gamma}\,,\qquad \mbox{where}\quad
\begin{cases}
\gamma=0 & \mbox{if }d=2\\
\gamma=\frac{d-1}{d+3}  & \mbox{if }d\geq3
\end{cases}.
\end{equation}
\end{theo}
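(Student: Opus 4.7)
The plan is to construct the competitor $h$ by a spherical-harmonic decomposition of $c$ around its $L^2$-projection onto $\mathcal K$ and appropriate extension of each mode. In the non-degenerate case this yields a linear epiperimetric gain ($\gamma=0$), and in the degenerate case a truncation argument near the zero set of $Q_A$ produces the fractional exponent $\gamma=(d-1)/(d+3)$.

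First, I let $Q_A\in\mathcal K$ minimize $\|c-Q_A|_{\partial B_1}\|_{L^2(\partial B_1)}$ and write $\phi:=c-Q_A|_{\partial B_1}$. Since every $Q_B\in\mathcal K$ has the same degree-$0$ component $\frac{1}{4d}$ on $\partial B_1$, vanishing degree-$1$ component, and a degree-$2$ harmonic component ranging over trace-free symmetric matrices subject to the PSD constraint, the projection modifies only the degree-$2$ harmonic part of $c$; in the non-degenerate case $A>0$, $\phi$ has no degree-$2$ harmonic content. Expanding $\phi=\sum_k\phi_k$ in spherical harmonics of degree $k$ with Laplace--Beltrami eigenvalue $\lambda_k=k(k+d-2)$, and noting (via $\Delta_\theta Q_A=-2d\,H_A$ with $H_A$ the degree-$2$ harmonic part of $Q_A$, combined with spherical-harmonic orthogonality) that the linear term in $\phi$ of $W(z)-W(Q_A)$ vanishes, one obtains the identity
\begin{equation*}
W(z)-\Theta \;=\; \frac{1}{d+2}\sum_k (k-2)(k+d)\|\phi_k\|_{L^2(\partial B_1)}^2.
\end{equation*}
The hypothesis $W(z)\ge\Theta$ then forces the positive high-frequency contributions ($k\ge 3$) to control the negative low-frequency ones ($k=0,1$), which in turn gives $\|\phi\|_{L^2(\partial B_1)}^2\lesssim W(z)-\Theta$.

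Next, I define the competitor $h:=Q_A+r^2\phi_0+r^2\phi_1+\sum_{k\ge 3}r^k\phi_k(\theta)$, keeping the $2$-homogeneous extension for the low-frequency modes (where no Dirichlet gain is available and non-negativity is safer) and using the harmonic $r^k$-homogeneous extension for the high-frequency modes. A direct computation, in which the cross-terms between $\nabla Q_A$ and the $k$-homogeneous modes vanish and all linear-in-$\phi$ contributions to the Dirichlet, trace, and obstacle pieces cancel, yields $W(z)-W(h)=\frac{1}{d+2}\sum_{k\ge 3}(k-2)^2\|\phi_k\|^2$ whenever $h\ge 0$. Combining with the identity above and the inequality $\frac{(k-2)^2}{(k-2)(k+d)}=\frac{k-2}{k+d}\ge\frac{1}{d+3}$ for $k\ge 3$ (together with the favourable sign of the low-frequency terms enforced by $W(z)\ge\Theta$) gives the classical linear epiperimetric estimate $W(h)\le W(z)-\frac{1}{d+3}(W(z)-\Theta)$. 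Non-negativity of $h$ is automatic when $A\ge c_0 I$, since then $Q_A\gtrsim r^2$ dominates the perturbation for $\delta$ small, and one obtains $\gamma=0$.

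The main obstacle is the degenerate case, when $A$ has rank $r<d$ and $Q_A$ vanishes on the codimension-$r$ subspace $V=\ker A$ (worst case $r=1$). The harmonic extension of $\phi$ may drive $h$ below zero near $V\cap\partial B_1$, and to prevent this I would revert to the $2$-homogeneous extension of $\phi$ inside a tube $T_\rho$ of radius $\rho$ around $V$, where $h\equiv z\ge 0$; outside $T_\rho$, $Q_A(x)\gtrsim\rho^2$ dominates the perturbation provided $\rho^2\gtrsim\|\phi\|_{L^\infty(\partial B_1)}$. The cost of the reversion scales as $|T_\rho|\|\phi\|_{L^\infty}^2\sim\rho^r\|\phi\|_{L^\infty}^2$, so the effective gain becomes
\begin{equation*}
W(z)-W(h) \;\gtrsim\; (W(z)-\Theta)\;-\;\rho^r\|\phi\|_{L^\infty}^2.
\end{equation*}
In $d=2$ the embedding $H^1(\partial B_1)\hookrightarrow L^\infty$ yields $\|\phi\|_{L^\infty}\lesssim(W(z)-\Theta)^{1/2}$, one can take $\rho\sim(W(z)-\Theta)^{1/4}$, and the reversion cost is of order $(W(z)-\Theta)^{5/4}$, which is higher-order than the linear gain; hence $\gamma=0$. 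For $d\ge 3$ the $H^1$-embedding into $L^\infty$ fails, and $\|\phi\|_{L^\infty}$ is only bounded by a strictly weaker power of $W(z)-\Theta$ through interpolation along the spherical-harmonic decomposition; optimizing $\rho$ against these two competing scales produces, after a balance calculation, the exponent $\gamma=(d-1)/(d+3)$. The most delicate technical points are the unilateral nature of the tangent-cone condition at the boundary of $\mathcal K$ (where $\phi$ may still carry a degree-$2$ harmonic component pointing into the admissible cone) and the careful interplay between the obstacle contribution $\int\max\{h,0\}\,dx$ and the tube reversion at exactly the right exponent.
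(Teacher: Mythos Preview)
Your proposal has the right overall philosophy---Fourier decomposition and an energy gain from the high modes---but the degenerate-case argument has a genuine gap. The tube-reversion step hinges on $\|\phi\|_{L^\infty(\partial B_1)}$: you need $Q_A\gtrsim\rho^2$ to dominate the perturbation outside $T_\rho$, and you size the reversion cost as $\rho^r\|\phi\|_{L^\infty}^2$. But the hypothesis only gives $c\in H^1(\partial B_1)$, and for $d\ge 3$ there is no embedding $H^1(\partial B_1)\hookrightarrow L^\infty$; your own remark that ``$\|\phi\|_{L^\infty}$ is only bounded by a strictly weaker power \dots\ through interpolation'' cannot be right, since $\|\phi\|_{L^\infty}$ may simply be $+\infty$. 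The ``balance calculation'' that is supposed to produce $\gamma=(d-1)/(d+3)$ is never carried out, and with $\|\phi\|_{L^\infty}$ uncontrolled there is nothing to balance. A related issue already appears in the non-degenerate case: near $\partial B_1$ the harmonic tail $\sum_{k\ge 3}r^k\phi_k$ is only controlled in $H^1$, so the claim that $Q_A\ge c_0 r^2$ ``dominates the perturbation for $\delta$ small'' is not justified either. Finally, by projecting onto $\KK$ you leave, in the degenerate case, a residual degree-$2$ harmonic $\phi_2$ that is invisible in your energy identity (the coefficient $(k-2)(k+d)$ vanishes at $k=2$) yet still affects non-negativity; you flag this but do not resolve it.

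The paper avoids all of this by a different construction. It writes $c=q_\nu+Q_A+\phi$ where $Q_A$ carries the \emph{full} degree-$\le 2$ content (a priori not positive semidefinite, so there is no residual $\phi_2$), replaces $Q_A$ by a nearby positive semidefinite $Q_B$, and extends the whole remainder $\psi:=(Q_A-Q_B)+\phi$ as a single $\alpha$-homogeneous function $r^\alpha\psi(\theta)$ with $\alpha>2$ to be chosen. Non-negativity of $h=q_\nu+Q_B+r^\alpha\psi(\theta)$ is then automatic pointwise, because $r^\alpha\le r^2$ on $B_1$ and $q_\nu+Q_B\ge 0$; no $L^\infty$ control and no truncation are needed. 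The price is that the degree-$2$ piece $Q_A-Q_B$ now contributes a term $+\eps_\alpha^2\,\|Q_A-Q_B\|^2$ to the energy comparison (Lemma~\ref{l:fourier}), against a gain $-\eps_\alpha\|\nabla_\theta\phi\|^2$ from the high modes. The exponent $\gamma$ does not come from a truncation balance but from a capacity-type estimate (Subsection~\ref{sub:sing:pre-final}): positivity of $c$ forces $\phi$ to compensate the negative part of $Q_A$ on a spherical cap of measure $\sim a_1^{(d-1)/2}$, which yields $\sum_j a_j^2\le C\|\nabla_\theta\phi\|^{2(1-\gamma)}$ with $\gamma=(d-1)/(d+3)$. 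Choosing $\eps_\alpha\sim\|\nabla_\theta\phi\|^{2\gamma}$ then balances the two terms and gives \eqref{e:epi_flat_point_sing}. This mechanism---a variable-$\alpha$ homogeneous extension for free non-negativity, together with the capacity estimate linking the degree-$2$ defect to the higher modes---is the missing idea in your proposal.
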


At flat points we recover the Weiss' epiperimetric inequality with a direct proof. To state it, recall that the collection $\KK_+$ of possible blow-ups at flat points is defined by
\begin{equation}
\label{defn:K+}
\KK_+:=\left\{q_\nu\colon \R^d\to \R\,:\,q_\nu(x)=(\max\{x\cdot\nu,0\})^2\mbox{ for some } \nu\in\R^d\text{ such that } |\nu|=\frac12\right\}.
\end{equation}
{The energy $W$ is constant on $\KK_+$, precisely we have $ W(q_\nu)=\frac{\omega_d}{16\, d (d+2)}$, for every $q_\nu\in \KK_+$. We will refer to this constant as the \emph{energy density at the flat points} and denote it by $\Theta_+$.}

\begin{theo}[Epiperimetric inequality at flat points]\label{t:epi:A}
There are dimensional constants $\delta_0>0$, $\delta>0$ and $\eps>0$ such that the following claim holds. For every non-negative function $c\in H^1(\partial B_1)$ satisfying
$$
\{x_d<-\delta_0\}\subset\{c=0\} \quad \mbox{and}\quad \|c-q_{e_d}\|_{L^2(\partial B_1)}\le \delta\,,
$$ 
there exists a non-negative function $h\in H^1(B_1)$ such that $h=c$ on $\partial B_1$ and 
	\begin{equation}\label{e:epi_flat_point}
	W(h)-\Theta_{+}\le (1-\eps)\big(W(z)-\Theta_{+}\big)\,,
	\end{equation}
	where $z$ is the $2$-homogeneous extension of $c$ to $B_1$.
\end{theo}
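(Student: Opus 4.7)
The plan is to work in a perturbative regime around a fixed flat blow-up $p\in\KK_+$, reduce the inequality to a comparison problem for the perturbation $\psi := c - p|_{\partial B_1}$, and construct the competitor $h$ explicitly by harmonic extension in $B_1$ together with a truncation enforcing $h\ge 0$.

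\smallskip
\textbf{Setup and reduction.} By a rotation I may assume that the $L^2(\partial B_1)$-nearest element of $\KK_+$ to $c$ is $p:=q_{e_d/2}=\tfrac14(x_d^+)^2$, so $\Delta p=\tfrac12\ind_{\{x_d>0\}}$ and $\partial_\nu p=2p$ on $\partial B_1$ by $2$-homogeneity. The first-order optimality of $\nu^*=e_d/2$ in the tangent directions $\nu'\perp e_d$ of $\{|\nu|=1/2\}$ yields the orthogonality
\[
\int_{\partial B_1}\psi\,x_i(x_d)_+\,d\HH^{d-1}=0,\qquad i=1,\ldots,d-1,
\]
and the contact hypothesis forces $\psi\equiv 0$ on the spherical cap $\{x_d<-\delta_0\}$. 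Since the linear part of $\tilde\phi\mapsto W(p+\tilde\phi)-W(p)$ vanishes (as $p$ is a critical point of $W$), a direct integration by parts using $\Delta p$ gives the reduction formula, valid for any $h=p+\tilde\phi\ge 0$ with $h|_{\partial B_1}=c$:
\[
W(h)-\Theta_+ \;=\; \int_{B_1}|\nabla\tilde\phi|^2-2\int_{\partial B_1}\tilde\phi^2+\int_{B_1\cap\{x_d<0\}}\tilde\phi_+ \;=:\; F(\tilde\phi).
\]
In particular $W(z)-\Theta_+=F(r^2\psi)$, and the inequality becomes $F(\tilde\phi)\le (1-\eps)F(r^2\psi)$ for a suitable admissible $\tilde\phi$.

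\smallskip
\textbf{Harmonic competitor and Fourier gain.} Decompose $\psi=\sum_{k\ge 0}\psi_k$ in spherical harmonics on $\partial B_1$, let $\phi_{\rm harm}(r\theta):=\sum_k r^k\psi_k(\theta)$ be its harmonic extension to $B_1$, and set $h:=\max(p+\phi_{\rm harm},0)$. Because $c\ge 0$, the truncation preserves the boundary trace, and a direct computation shows
$W(h)\le W(p+\phi_{\rm harm})=\Theta_++F(\phi_{\rm harm})+\int_{\{x_d>0\}}(p+\phi_{\rm harm})_-$, where the last term is supported where $p+\phi_{\rm harm}<0$ inside $\{p>0\}$. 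Parseval on the sphere yields the crucial mode-by-mode identity
\[
\int_{B_1}|\nabla(r^2\psi_k)|^2-\int_{B_1}|\nabla(r^k\psi_k)|^2=\frac{(k-2)^2}{d+2}\,\|\psi_k\|^2_{L^2(\partial B_1)}\,,
\]
so that the Dirichlet–boundary part of $F(r^2\psi)-F(\phi_{\rm harm})$ equals $\sum_{k\ge 0}\tfrac{(k-2)^2}{d+2}\|\psi_k\|^2$, strictly positive outside the degree-$2$ subspace.

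\smallskip
\textbf{Closing the inequality.} The elementary identity $\lambda_k-2d=(k-2)(k+d)$ gives $\tfrac{(k-2)^2}{d+2}\ge\tfrac{1}{d+3}\cdot\tfrac{(k-2)(k+d)}{d+2}$ for every $k\ge 3$, with equality at $k=3$; the right-hand side is exactly $\tfrac{1}{d+3}$ times the coefficient of $\|\psi_k\|^2$ in the Fourier expansion of $F(r^2\psi)$. On the subspace of modes $k\ge 3$ the inequality therefore closes directly with $\eps_0\approx 1/(d+3)$. On the degree-$2$ eigenspace harmonic and $2$-homogeneous extensions coincide; its tangent-to-$\KK_+$ part is annihilated by the orthogonality of Step 1. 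The modes $\psi_0,\psi_1$ contribute \emph{non-positively} to $F(r^2\psi)$: in the case $F(r^2\psi)\le 0$ the main inequality holds automatically from the sign of the Fourier gain, while when $F(r^2\psi)>0$ a Poincar\'e-type estimate based on the cap vanishing $\psi\equiv 0$ on $\{x_d<-\delta_0\}$ bounds $\|\psi_0\|^2+\|\psi_1\|^2$ by $\sum_{k\ge 2}\|\psi_k\|^2$, preserving the ratio inequality. Finally, the obstacle term $\int_{\{x_d<0\}}(\phi_{\rm harm})_+$ and the truncation error $\int_{\{x_d>0\}}(p+\phi_{\rm harm})_-$ are both of higher order $o(\|\psi\|^2_{L^2(\partial B_1)})$ by classical decay of harmonic extensions of data vanishing on the cap, and are absorbed at the cost of possibly shrinking $\eps$.

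\smallskip
\textbf{Main obstacle.} The delicate step is the coupled control of the low modes $\psi_0,\psi_1$, the zero-gain degree-$2$ component, and the obstacle/truncation errors: the gain on $k=0,1$ is accompanied by a negative quadratic contribution to $F(r^2\psi)$, and the gain vanishes identically on $k=2$, so the cap vanishing $\{x_d<-\delta_0\}\subset\{c=0\}$ and the Euler--Lagrange orthogonality must be used jointly and quantitatively; this is what ultimately pins down the admissible dimensional constants $\delta_0$, $\delta$, and the final $\eps$.
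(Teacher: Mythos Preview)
Your approach diverges from the paper's in three structural ways, and at least two of them introduce genuine gaps.

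\textbf{Where the paper differs.} The paper does \emph{not} use full-sphere spherical harmonics, does \emph{not} use the harmonic extension, and does \emph{not} fix $\nu$ as the $L^2$-nearest point of $\KK_+$. Instead it works with the Dirichlet eigenfunctions $\{\phi_j\}$ on the spherical cap $S_{\delta_0}=\{x_d>-\delta_0\}$, builds the competitor as $h=q_\nu+r^\alpha\phi$ for a single homogeneity $\alpha\in(2,3)$, and chooses $\nu$ via the inverse function theorem so that $c-q_\nu$ has \emph{no} component along the first $d$ cap eigenfunctions. Two consequences: first, $h=(r^2-r^\alpha)q_\nu(\theta)+r^\alpha c(\theta)\ge 0$ automatically, so no truncation is needed; second, the linear ``obstacle'' term $\beta(\phi)=(1-b)\int_{B_1}\tilde\phi-c_0\int_{B_1^+}\tilde\phi$ cancels \emph{exactly} in the combination $\W(h)-\Theta_+-(1-\eps_\alpha)(\W(z)-\Theta_+)$, precisely because $\eps_\alpha=\frac{\alpha-2}{d+\alpha}$ is tuned so that $\frac{1}{d+\alpha}=\frac{1-\eps_\alpha}{d+2}$.

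\textbf{The main gap in your argument.} Your claim that the obstacle term $\int_{\{x_d<0\}}(\phi_{\rm harm})_+$ and the truncation error are ``$o(\|\psi\|^2)$'' is not correct as stated: the obstacle term in $F(r^2\psi)$ is $\frac{1}{d+2}\int_{\partial B_1^-}\psi$, which is \emph{linear} in $\psi$, not quadratic. In the paper this linear piece is killed by the algebraic identity above; with a full harmonic extension each mode $r^k\psi_k$ contributes $\frac{1}{d+k}\int_{\partial B_1^-}\psi_k$, and there is no single $\eps$ making these match $(1-\eps)\cdot\frac{1}{d+2}$ across all $k$. You therefore still owe a mechanism that disposes of a genuinely linear error, and ``absorb into $\eps$'' cannot work when the Fourier gain is quadratic.

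\textbf{A second gap: the degree-two modes.} Your orthogonality $\int_{\partial B_1}\psi\,x_i(x_d)_+=0$ comes from the tangent space to $\KK_+$, but the functions $x_i(x_d)_+$ are \emph{not} full-sphere spherical harmonics of degree $2$; they do not span (or even lie in) the eigenspace $E_{2d}$, whose dimension is $\frac{(d-1)(d+2)}{2}$. So this orthogonality tells you nothing about $\psi_2$. You are right that $\psi_2$ contributes zero to both the gain and to $W_0(r^2\psi)$, so it is harmless for the quadratic $W_0$-comparison, but it also contributes nothing you can use to absorb the linear obstacle term or the truncation error. The paper sidesteps this entirely by working on the cap: there the first $d$ eigenfunctions are close to $x_d$ and $x_ix_d$ ($i<d$), the map $\nu\mapsto(\int q_\nu\phi_1,\dots,\int q_\nu\phi_d)$ is a local diffeomorphism, and a choice of $\nu$ removes those modes so that only eigenvalues $\ge 3d$ remain.

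In short, your Fourier gain computation on $W_0$ is fine, but the two ``higher-order'' terms you defer are exactly where the difficulty lies; the paper's cap decomposition and $\alpha$-homogeneous competitor are designed precisely to make those terms vanish identically rather than having to estimate them.
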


Theorem \ref{t:epi:A} was already proved by Weiss in \cite{Weiss2} using a very elegant and innovative contradiction argument, later exploited also by Garofalo-Petrosyan-Garcia and Focardi-Spadaro in the context of the thin obstacle problem (see \cite{GaPeVe,FoSp}). { However, the same proof works only at singular points of maximal and minimal dimension under some special assumptions on the projection of the trace on $\KK$, which can be verified only in dimension $d=2$. Notice that the dimension of a singular point is the maximal $\dim(\ker A)$ among all $Q_A\in \KK$ blow-ups of $u$ at the singular point.  Hence, no epiperimetric inequality was known in the literature for the whole singular set, as it happens in Theorem~\ref{t:epi:B}} and Weiss himself suggests that  "...it should however be possible to give a direct proof of the epiperimetric inequality which would then also cover singular sets of intermediate dimension" (see \cite{Weiss2}). Theorems \ref{t:epi:B} and \ref{t:epi:A} answer affirmatively to this question, and in particular Theorem \ref{t:epi:B} is the first instance in the literature of an epiperimetric inequality of logarithmic type and the first instance in which the epiperimetric inequality for singular points has a direct proof. The methods developed to prove the epiperimetric inequality at singular points of any stratum have a quite general nature and will be applied to provide similar results in other problems, for instance in the case of the thin obstacle problem \cite{CSVthin}.

The proof of these theorems is direct (i.e. we produce explicit competitors) and it is remarkable in our opinion how the failure of Weiss' contradiction argument translates into a weakening of the epiperimetric inequality, that is the necessity of introducing the exponent $\gamma$ in \eqref{e:epi_flat_point_sing}. To explain this better, notice that, in analogy with Reifenberg and White's pioneering work (see \cite{Reif2,Wh}) and similarly to previous work of the last two authors (see \cite{SpVe}), the key ingredients are
\begin{itemize}
	\item a Fourier decomposition of the trace $c-q_\nu$ (resp. $c-Q_A$) onto the eigenfunctions of $\mathbb{S}^{d-1}$; 
	\item an energy improvement with respect to $z$ obtained by taking the harmonic extension of the modes with homogeneity larger than two;
	\item a choice of $\nu$  (resp. $A$) to control the projection of $c-q_\nu$ (resp. $c-Q_A$) onto the eigenfunctions of homogeneity one and two, with the projection on the higher modes.
\end{itemize}
At flat points and at every point in dimension $d=2$, the estimate of the third bullet is linear, however in general dimension at singular points we can only prove a control of the form
$$
\|P (c-Q_A)\|_{H^1(\de B_1)}\le C\|(Id-P)(c-Q_A)\|_{H^1(\partial B_1)}^{1-\gamma}\qquad \gamma\in(0,1)\,,
$$
where $P$ denotes the projection on the modes relative to homogeneity two (see \eqref{e:sing:pre-final}). The reason for this different behavior is essentially the following: {at the flat points we are able to eliminate the lower modes (the modes corresponding to homogeneity smaller than two) on a spherical cap by means of the choice of the vector $\nu$; this is possible since the space of admissible functions $q_\nu$ is an open manifold of the same dimension as the eigenspace corresponding to the lower modes, so we can apply an implicit function argument (see Lemma \ref{l:inversa}).} 
At general singular points, we would like to eliminate the modes of homogeneity two, that is the modes corresponding to the eigenvalue $2d$ on the sphere and whose eigenspace can be identified with the space of $d\times d$ real symmetric matrices $S_d(\R)$. However, the positivity constraint on the competitor forces the choice of the matrix $A$ to be in the space of nonnegative symmetric matrices $S_d^+(\R)$.
Now these two spaces have the same dimension, but, due to the non-negativity assumption, the set $S_d^+(\R)\subset S_d(\R)$ is not open, so we cannot apply the implicit function theorem here. Indeed, if we are in its interior, which corresponds to the singular points studied by Weiss, then the argument works and we can eliminate the second modes; but at the boundary of $S_d^+(\R)$ an implicit function argument only provides us with a matrix in the larger space $S_d(\R)$.
This leaves us to estimate the difference between the element of $S_d(\R)$, corresponding to the second modes of the trace $c$, and its projection on $S_d^+(\R)$. We can do this by means of the additional condition that $c$ is positive, which suggests that this difference should be comparable to the higher modes of the trace, but because of capacitary reasons the bound comes with an exponent $\gamma\neq 0$. Roughly speaking, if the negative part produced by the second modes is very small, concentrated on a set of small capacity, then it can be compensated by a function with very small energy, much smaller than the distance to $S_d^+(\R)$ in the space of symmetric matrices. In particular, it seems that this obstruction is of the same nature as the one that appears in \cite{Weiss2}, where the strong convergence of the traces cannot \emph{see} the nodal sets of small capacity.

\noindent A similar phenomenon can be found in the theory of minimal surfaces. Indeed if we think about the collection of singular points of lower dimension as a minimal surface in codimension higher than one, then it is known the existence of non-integrable cones, that is cones with non-integrable Jacobi fields. In this case the best possible rate of convergence to the blow-up is indeed logarithmic, as shown in \cite{AdSi}.

\noindent We should remark that an estimate of the form \eqref{e:epi_flat_point_sing} is essentially the best one can get by using only the positivity of the trace $c$ (see Example \ref{esempio}). It follows that Theorem \ref{t:epi:B} is essentially optimal, as it concerns positive traces on the sphere. However it is conceivable that for solutions of the obstacle problem a better inequality could be obtained, by using more properties of the minimizers.

It is well known that Theorem \ref{t:epi:A} leads to the uniqueness of the blow-up at every flat point and also to the $C^{1,\alpha}$ regularity of the regular part of the free-boundary (see \cite{Weiss2}). We show that Theorem \ref{t:epi:B} yields the uniqueness of the blow-up and the $C^{1}$ regularity of the singular set, with an explicit logarithmic modulus of continuity. This is an improvement on the results of Caffarelli and Monneau, where such a modulus arises by contradiction arguments and is therefore not explicit (see \cite{Caff3,Monneau}). The method of the present paper is flexible enough to cover more general and nonlinear functionals, such as the area.  The stratification of the singular set for the area functional, even in the context of Riemannian manifolds, and the $C^1$ regularity of the strata were recently obtained in \cite{FoGeSp,FoGelliSp}. Before giving the precise statements, we need some additional definitions.
We split the free-boundary of a minimizer $u$ in \emph{regular} and \emph{singular} part, defined as 
\begin{gather}
{\rm Reg}(u):=  \{ x\in \partial \{ u>0\} \cap \Omega: \mbox{ any blow up at $x$ is of the form } q_\nu\in \KK_+ \} \notag\\
{\rm Sing}(u):=\{ x\in \partial \{ u>0\} \cap \Omega: \mbox{ at least one blow up at $x$ is {\bf{not}} of the form } q_\nu\in \KK_+ \}\notag
\end{gather}
Their regularity is the content of the following results.

\begin{theo}[Uniqueness of the blow up and logarithmic convergence]\label{t:uniq}
	Let $ \gamma= \frac{d-1}{d+3}$, $\Omega\subset\R^d$ be an open set and $u\in H^1(\Omega)$ a minimizer of $\mathcal E$. Then the blow up of $u$ at each point of the free boundary $\partial \{ u>0\} \cap \Omega$ is unique. Moreover, the following convergence holds.
	\begin{enumerate}
		\item  For every $x_0\in \reg(u) \cap \Omega$ there exist $r:= r(x_0)$, $C:= C(x_0)$ and $\nu({x_0}) \in \R^d$, with $|\nu(x_0)|=\sfrac12$, such that
		$$\int_{\partial B_1} \left| u_{x_1,r} - q_{\nu(x_1)} \right| \, d\HH^{n-1} \leq C r^{\frac{(n+2) \eps}{2(1-\eps)}}\,, \quad \mbox{for every}\quad r \leq r_0 \mbox{ and } x_1\in \reg(u) \cap B_r(x_0).$$

		\item For every 
		$x_0 \in \sing(u)$,  there exist $r:= r(x_0)$, $C:= C(x_0)$ and $Q_{x_0} \in \KK$ such that 
		\begin{equation}
		\label{eqn:log-uniq}
		\int_{\partial B_1} \left| u_{x_0,r} - Q_{x_0}\right| \, d\HH^{n-1} \leq C (\log r )^{-\frac{1-\gamma}{2\gamma}} \,,\quad \mbox{for every}\quad r \leq r_0.
		\end{equation}
	\end{enumerate}
\end{theo}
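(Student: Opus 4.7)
My plan follows the classical Weiss--Monneau scheme. For $x_0$ a free-boundary point, set $e(r) := W(u_{x_0, r}) - \Theta_\star$, with $\Theta_\star = \Theta_+$ at regular and $\Theta_\star = \Theta$ at singular points. The Weiss monotonicity $\frac{d}{dr}W(u_{x_0, r}) \geq 0$, together with the classification of blow-ups (lying in $\KK_+$, resp.\ $\KK$), ensures that $e$ is non-negative, non-decreasing, and $e(r) \to 0$ as $r \to 0^+$. A direct change of variables in Weiss' identity yields
\begin{equation}\label{eqn:plan_dru}
\|\partial_r u_{x_0, r}\|_{L^2(\partial B_1)}^2 = \frac{e'(r)}{2r}.
\end{equation}

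Denote by $c_r := u_{x_0, r}|_{\partial B_1}$ the trace and by $z_r$ its $2$-homogeneous extension to $B_1$. By $L^2(\partial B_1)$-compactness of the rescalings and the classification of blow-ups, $c_r$ satisfies the hypotheses of Theorem~\ref{t:epi:A} (resp.\ \ref{t:epi:B}) for $r$ small enough; let $h_r$ denote the associated competitor. By minimality, $W(u_{x_0, r}) \leq W(h_r)$, and hence
$$e(r) \leq (W(z_r) - \Theta_\star) - \varepsilon (W(z_r) - \Theta_\star)^{1+\gamma}.$$
A standard Weiss-type computation (a reformulation of the monotonicity formula, as in \cite{Weiss2}) gives the identity $W(z_r) - W(u_{x_0, r}) = C_d\, r\, e'(r)$ for a positive dimensional constant $C_d$. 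Combining these two facts with $W(z_r) - \Theta_\star \geq e(r)$ produces the differential inequality
\begin{equation}\label{eqn:plan_ode}
r\, e'(r) \geq c_d\, \varepsilon\, e(r)^{1+\gamma}\quad(\text{singular}), \qquad r\, e'(r) \geq \tilde{c}_d\, \tfrac{\varepsilon}{1-\varepsilon}\, e(r)\quad(\text{regular}).
\end{equation}

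Direct integration of \eqref{eqn:plan_ode} yields
$$e(r) \leq C\bigl(\log(1/r)\bigr)^{-1/\gamma}\quad(\text{singular}),\qquad e(r) \leq C\, r^{(d+2)\varepsilon/(1-\varepsilon)}\quad(\text{regular}).$$
Using \eqref{eqn:plan_ode} to bound $1/r$ from above in \eqref{eqn:plan_dru} one obtains $\|\partial_r u_{x_0, r}\|_{L^2(\partial B_1)} \leq C\, e'(r)\, e(r)^{-(1+\gamma)/2}$. Since $(1+\gamma)/2 < 1$, integrating in $r$ between $0 < r' < r'' < r_0$ gives
$$\|u_{x_0, r'} - u_{x_0, r''}\|_{L^2(\partial B_1)} \leq \frac{2C}{1-\gamma}\Bigl(e(r'')^{(1-\gamma)/2} - e(r')^{(1-\gamma)/2}\Bigr).$$
Hence $\{u_{x_0, r}\}_r$ is Cauchy in $L^2(\partial B_1)$ as $r \to 0^+$; its limit $Q_{x_0}$ satisfies $W(Q_{x_0}) = \Theta_\star$ by $W$-continuity, so $Q_{x_0} \in \KK$ (resp.\ $\KK_+$), which proves uniqueness. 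Letting $r' \to 0^+$ and plugging in the decay of $e$ gives the logarithmic rate in~(2) after using $\|\cdot\|_{L^1} \leq C\|\cdot\|_{L^2}$. The regular case uses the linear ODE in \eqref{eqn:plan_ode} to yield the polynomial rate of~(1); the uniformity in $x_1 \in \reg(u) \cap B_r(x_0)$ follows from the openness of $\reg(u)$ and the fact that the parameters in Theorem~\ref{t:epi:A} are dimensional.

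The chief technical point, in my view, is the verification that the trace $c_r$ meets the smallness hypotheses of Theorems~\ref{t:epi:A}--\ref{t:epi:B} uniformly for all small $r$ (and, for part~(1), uniformly in $x_1$ in a neighborhood of $x_0$). This requires $L^2$-compactness of the rescalings, stability of the type of free-boundary point under small perturbations, and continuity of the projections onto $\KK$ and $\KK_+$; these essentially repackage classical results of Caffarelli and guarantee that the ODE argument above applies on the whole interval $(0, r_0]$.
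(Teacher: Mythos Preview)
Your approach is correct and parallels the paper's: both combine the monotonicity identity with the epiperimetric inequality and minimality to derive the ODE $r\,e'(r) \geq c\, e(r)^{1+\gamma}$, and then integrate to obtain $e(r) \lesssim (-\log r)^{-1/\gamma}$. Where you diverge from the paper is in converting this into a Cauchy estimate for $\{u_{x_0,r}\}_r$. The paper first bounds $\|u_t-u_s\|_{L^1}$ by $C(\log(t/s))^{1/2}(e(t)-e(s))^{1/2}$ via Cauchy--Schwarz and then, because this estimate is not directly summable, resorts to an \emph{exponentially dyadic} decomposition over scales $r_k = 2^{-2^k}r_0$ to produce the final rate. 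Your route is more direct: feeding the ODE back into the radial-derivative bound gives $\|\partial_r u_r\|_{L^2} \leq C\, e'(r)\, e(r)^{-(1+\gamma)/2}$, which is an exact derivative and integrates in one step to $C\, e(r)^{(1-\gamma)/2}$. This avoids the dyadic bookkeeping and yields the same rate.

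Two small corrections. Neither \eqref{eqn:plan_dru} nor the claimed relation $W(z_r)-W(u_{x_0,r}) = C_d\, r\, e'(r)$ is an identity: the monotonicity formula of the paper reads $e'(r) = \tfrac{d+2}{r}[W(z_r)-W(u_r)] + f(r)$ with $f(r) = r\,\|\partial_r u_r\|_{L^2(\partial B_1)}^2$, so one only has $\|\partial_r u_r\|_{L^2}^2 = f(r)/r \leq e'(r)/r$ and $W(z_r)-W(u_r) = \tfrac{r}{d+2}(e'(r)-f(r)) \leq \tfrac{r}{d+2}e'(r)$. Both are inequalities in the direction you need, so your argument survives unchanged. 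Also, the identification of the Cauchy limit $Q_{x_0}$ as an element of $\KK$ does not follow from ``$W$-continuity'' under $L^2(\partial B_1)$-convergence (which fails); rather, the classical strong-$H^1$ compactness of the rescalings forces every subsequential blow-up to lie in $\KK$ and to coincide with your $L^2$-limit.
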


The next regularity result recovers all the previously known results and improves the regularity of the singular set to $C^{1,\log}$. Before stating it we need to make precise what we mean by singular points of intermediate dimension. Given $k=0,\dots,d-1$, we define the \emph{singular set of dimension $k$} (also called $k$-th stratum) $S_k(u)$ as
\begin{align*}
S_k(u)
	&:=\{x\in \sing(u)\,:\,\dim(\ker(A))\leq k\,\mbox{ for every blow-up }\,Q_A\in \KK\,\mbox{ of $u$ at $x$}\}\\
	&=\bigcup_{l=1}^k\{x\in \sing(u)\,:\,\dim(\ker(A))= l\,\mbox{ for \emph{the unique} blow-up }\,Q_A\in \KK\,\mbox{ of $u$ at $x$}\}\,,
\end{align*}
where the equivalence of the two definitions is guaranteed by Theorem \ref{t:uniq}.
In the case of the stratum $S_0(u)$ the inequality \eqref{eqn:log-uniq} can be improved to $C^{1,\beta}$ convergence.

\begin{theo}[Regularity of the free boundary]\label{t:reg}
	Let $\eps>0$ be the constant from Theorem~\ref{t:epi:A}, $\beta =\frac{(d+2) \eps}{2(1-\eps)} \big( 1+ \frac{(d+2) \eps}{2(1-\eps)}\big)^{-1}$, $\Omega\subset\R^d$ be an open set and $u\in H^1(\Omega)$ a minimizer of $\mathcal E$. Then
	\begin{enumerate}
		\item $\reg(u)$ is locally the graph of a $C^{1,\beta}$ function; namely, for every $x_0\in \reg(u) \cap \Omega$ there exists $r:= r(x_0)$ such that $\reg(u) \cap B_{r}(x_0)$ is a $C^{1,\beta}$- submanifold of dimension $(d-1)$;
		
		\item For every $k=0,..., d-1$, $S_k(u)$ is contained in the union of countably many submanifolds of dimension $k$ and class $C^{1, log}$; namely for every $x_0\in \sing(u) \cap \Omega$ there exists $r_0:= r_0(x_0)$ and $C:= C(x_0)$ such that a logarithmic estimate holds
		\begin{equation}
		\label{eqn:log-cont-fb_intro}
		|Q(x_1) - Q(x_2)| \leq C (\log |x_1-x_2|)^{-\frac{1-\gamma}{2\gamma}} \qquad \mbox{for any } x_1,x_2 \in S_k \cap B_r(x_0).
		\end{equation}
		
		\item If the dimension $d=2$, then we have the estimate 
		\begin{equation}
		\label{eqn:log-cont-fb_intro_d2}
		|Q(x_1) - Q(x_2)| \leq C |x_1-x_2|^\beta \qquad \mbox{for any } x_1,x_2 \in S_k \cap B_r(x_0)\,,
		\end{equation}
		for $k=1,2$, where $\beta$ is the same as in (1). In particular $S_0$ consists of isolated points and $S_1$ is contained in the union of at most countably many curves of class $C^{1, \beta}$.
	\end{enumerate}
\end{theo}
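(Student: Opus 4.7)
The plan is to deduce Theorem~\ref{t:reg} from the quantitative uniqueness of Theorem~\ref{t:uniq} via a Campanato-type comparison of blow-ups at nearby free boundary points, followed by a Whitney-type extension argument to produce the submanifolds.

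For each $x$ on the free boundary and each scale $r$, let $\nu_r(x)$ (resp. $A_r(x)$) denote the $L^2(\partial B_1)$-projection of $u_{x,r}$ onto $\KK_+$ (resp. $\KK$), so that Theorem~\ref{t:uniq} yields $|\nu(x)-\nu_r(x)| \le Cr^{\alpha}$ with $\alpha = \tfrac{(d+2)\eps}{2(1-\eps)}$ on $\reg(u)$, and the logarithmic analogue on $\sing(u)$. The $C^{1,1}$ regularity of $u$, combined with the vanishing of $u$ and $\nabla u$ on the free boundary, produces the center-Lipschitz estimate
\[
|u_{x_1,r}(y) - u_{x_2,r}(y)| \le C\,\frac{|x_1-x_2|}{r} \qquad \text{for } y \in B_1, \ x_1, x_2 \in \partial\{u>0\} \cap B_r(x_0),
\]
and hence $|\nu_r(x_1) - \nu_r(x_2)| \le C|x_1-x_2|/r$, with the analogous inequality for $A_r$. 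The triangle inequality
\[
|\nu(x_1) - \nu(x_2)| \le |\nu(x_1) - \nu_r(x_1)| + |\nu_r(x_1) - \nu_r(x_2)| + |\nu_r(x_2) - \nu(x_2)| \le C r^\alpha + C \frac{|x_1-x_2|}{r},
\]
optimized at $r = |x_1-x_2|^{1/(1+\alpha)}$, gives the Hölder estimate $|\nu(x_1)-\nu(x_2)| \le C|x_1-x_2|^{\alpha/(1+\alpha)} = C|x_1-x_2|^\beta$ on $\reg(u)$; the same balancing with the logarithmic rate on $\sing(u)$ produces $|A(x_1)-A(x_2)| \le C(\log|x_1-x_2|)^{-(1-\gamma)/(2\gamma)}$.

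Part (1) then follows from standard arguments on the regular set: the free boundary is locally a Lipschitz graph by the proximity of $u$ to its blow-up, and the Hölder continuity of $\nu$ upgrades this to $C^{1,\beta}$. For part (2), I exploit that $u(x_1)=0$ for every $x_1\in S_k$; combined with the rate of convergence of $u_{x_0,r}$ to $Q_{x_0}$ and the uniform $C^{1,1}$ bound, this gives the flatness estimate
\[
\frac{|Q_{x_0}(x_1-x_0)|}{|x_1-x_0|^2} \le C\bigl(\log|x_1-x_0|\bigr)^{-(1-\gamma)/(2\gamma)},
\]
showing that $x_1-x_0$ is logarithmically close to $\ker A(x_0)$, a subspace of dimension $\le k$. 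Decomposing $S_k$ into countably many subsets on which $\dim\ker A$ and the constants above are uniform, Whitney's extension theorem with logarithmic modulus yields countably many $k$-dimensional $C^{1,\log}$ submanifolds covering $S_k$. Part (3) is the specialization to $d=2$: since $\gamma=0$, Theorem~\ref{t:uniq}(2) provides a polynomial rate on $\sing(u)$, the Campanato argument above yields a Hölder modulus for $A$, and the same Whitney extension produces $C^{1,\beta}$ strata; $S_0$ is discrete because the flatness estimate combined with the positive definiteness of $Q_{x_0}$ forces isolation.

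The main obstacle is verifying the Whitney compatibility condition with the logarithmic modulus: one must propagate the rate of Theorem~\ref{t:uniq} through the rescaling identity at varying base points and scales, and check that the resulting modulus satisfies the Dini-type conditions required by the extension theorem to produce a $C^{1,\log}$ submanifold, rather than merely a rectifiable set with $k$-dimensional tangent planes.
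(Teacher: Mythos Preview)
Your proposal is correct and follows essentially the same route as the paper: the Campanato-type triangle inequality $|Q_{x_1}-Q_{x_2}|\le \|u_{x_1,r}-Q_{x_1}\|+\|u_{x_1,r}-u_{x_2,r}\|+\|u_{x_2,r}-Q_{x_2}\|$, with the center term bounded via the $C^{1,1}$ estimate and $\nabla u(x_i)=0$, and the outer terms by Theorem~\ref{t:uniq}, followed by a choice of $r$ that balances the two contributions. The paper's only visible difference is cosmetic: it works directly with the unique blow-ups $Q_{x_i}$ rather than the projections $A_r(x_i)$, it writes the center estimate as $C\,(r+|x_1-x_2|)\,|x_1-x_2|/r^2$ (equivalent to your $C|x_1-x_2|/r$ once $r\ge |x_1-x_2|$), and in the singular case it makes the explicit choice $r=|x_1-x_2|\,(-\log|x_1-x_2|)^{(1-\gamma)/(2\gamma)}$ rather than phrasing it as an optimization. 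The Whitney-extension step, which you spell out (including the flatness estimate $|Q_{x_0}(x_1-x_0)|/|x_1-x_0|^2$ controlling the distance to $\ker A(x_0)$), is left implicit in the paper; your identification of the modulus-compatibility check as the only residual point is accurate and is handled by the fact that $\omega(t)=(-\log t)^{-(1-\gamma)/(2\gamma)}$ is an admissible (increasing, concave, vanishing at $0$) modulus for Whitney's theorem.
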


\begin{oss}
	Thanks to a result of Caffarelli and Rivi\'ere (see \cite{CaRe}) it is possible to improve (3) to the following result: the boundary of a connected component
of the interior of the free-boundary is analytic except at finitely many singular points.
\end{oss}

  Theorems \ref{t:uniq} and \ref{t:reg} remain true if we consider a H\"older continuous weight function  $q:\Omega\to\R^+$ and more general functionals, for instance
  \begin{equation*}
  \mathcal{E}_q(u):=\int_\Omega \left[|\nabla u|^2+q(x) |u| \right]\,dx, \qquad \mathcal{A}_q(u):=\int_\Omega \left[\sqrt{|\nabla u|^2+1} +q(x) |u| \right]\,dx.
  \end{equation*}
  In this case, the regular and singular parts at a given point $x$ are defined as for $\mathcal E$, up to a constant which depends on $q(x)$. Given $u\in H^1(B_1)$ positive minimizer of $\mathcal E$, we define
  \begin{equation*}
  \begin{split}
  {\rm Reg}_q(u)&:=  \Big\{ x\in \partial \{ u>0\} \cap \Omega: \mbox{ any blow up at $x$ is of the form } q_\nu
  \mbox{ for } |\nu|=\frac{q(x)}2\Big\} ,
  \end{split}
  \end{equation*}
  \begin{equation*}
  \begin{split}
  {\rm Sing}_q(u)&:=  \Big\{ x\in \partial \{ u>0\} \cap \Omega: \mbox{ at least one blow up at $x$ is {\bf{not}} of the form } q_\nu,   \mbox{ for } |\nu|=\frac{q(x)}2 \Big\} ,
  \end{split}
  \end{equation*}
 $$ S_{q,k}(u)
	:=\{x\in \sing_q(u)\,:\,\dim(\ker(A))\leq k\,\mbox{ for every blow-up }\,Q_A\in \KK\,\mbox{ of $u$ at $x$}\}.
$$  
  \begin{theo}[H\"older continuous weight functions and area functional]\label{c:Q_funct}
  	Let $\alpha>0$, $\Omega\subset\R^d$ be an open set and  $q\in C^{0,\alpha}(\Omega;\R^+)$ be an H\"older continuous function such that $q\ge c_q>0$, where $c_q$ is a given constant. Let $u\in H^1(\Omega)$ be a minimizer of $\mathcal E_q$ or $\mathcal A_q$. Then the blow up of $u$ at each point of the free boundary $\partial \{ u>0\} \cap \Omega$ is unique and 
  	\begin{enumerate}
  		\item  there exists $\beta>0$ such that $\reg_q(u)$ is locally the graph of a $C^{1,\beta}$ function;
  		\item For every $k=0,..., d-1$, $S_{q,k}(u)$ is contained in the union of countably many submanifolds of dimension $k$ and class $C^{1, log}$; namely for every $x_0\in \sing_q(u) \cap \Omega$ there exists $r_0:= r_0(x_0)$ and $C:= C(x_0)$ such that a logarithmic estimate holds
  		\begin{equation}
  		\label{eqn:log-cont-fb}
  		|Q(x_1) - Q(x_2)| \leq C (\log |x_1-x_2|)^{-\frac{1-\gamma}{2\gamma}} \qquad \mbox{for any } x_1,x_2 \in \sing_q(u) \cap B_r(x_0).
  		\end{equation}
  	\end{enumerate}
  \end{theo}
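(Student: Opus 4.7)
We reduce the proof to a perturbation of the constant-coefficient case of Theorems \ref{t:uniq} and \ref{t:reg}. Fix a free-boundary point $x_0\in\de\{u>0\}\cap\Omega$ and set $q_0:=q(x_0)$. Since $q_0$ is a positive constant, the constant-coefficient problem with weight $q_0$ reduces to the one for $\mathcal E$ by the rescaling $v:=u/q_0$, and the epiperimetric inequalities of Theorems \ref{t:epi:A} and \ref{t:epi:B} apply to the corresponding \emph{frozen} Weiss energy
\begin{equation*}
W_{x_0}(v):=\int_{B_1}|\nabla v|^2\,dx-2\int_{\de B_1}v^2\,d\HH^{d-1}+q_0\int_{B_1} v^+\,dx,
\end{equation*}
with admissible blow-ups $q_\nu$, $|\nu|=q_0/2$, and $Q_A$, $\mathrm{tr}(A)=q_0/4$, consistently with the definitions of $\reg_q(u)$ and $\sing_q(u)$ in the statement.

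Setting $u_{x_0,r}(y):=r^{-2}u(x_0+ry)$ and using the Euler--Lagrange equation $2\Delta u=q(x)\,\ind_{\{u>0\}}$, a direct computation of $\sfrac{d}{dr}W_{x_0}(u_{x_0,r})$ reproduces the classical Weiss monotonicity identity up to the error arising from replacing $q(x)$ by $q_0$ in the bulk term. H\"older continuity of $q$ bounds this error by $C r^{\alpha-1}$, yielding the almost-monotonicity
\begin{equation*}
\sfrac{d}{dr}W_{x_0}(u_{x_0,r})\ge \sfrac{2}{r}\int_{\de B_1}\bigl(\de_r u_{x_0,r}\bigr)^2\,d\HH^{d-1}-C r^{\alpha-1}.
\end{equation*}
Combining this with the epiperimetric inequality exactly as in the constant-coefficient proof produces the differential inequality
\begin{equation*}
\sfrac{d}{dr}\bigl[W_{x_0}(u_{x_0,r})-\Theta_{q_0}\bigr]\ge \sfrac{c}{r}\bigl[W_{x_0}(u_{x_0,r})-\Theta_{q_0}\bigr]^{1+\gamma}-C r^{\alpha-1}.
\end{equation*}
Because $\alpha>0$, the perturbation $Cr^{\alpha-1}$ is integrable at $0$, so the ODE analysis of Theorem \ref{t:uniq} still delivers the same logarithmic (resp.\ polynomial) decay of $W_{x_0}(u_{x_0,r})-\Theta_{q_0}$, up to an additional polynomial contribution of order $r^\alpha$. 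The remaining steps of the proofs of Theorems \ref{t:uniq} and \ref{t:reg}--translating the energy decay into an $L^2$ rate for the traces, uniqueness of the blow-up, and a Whitney extension argument for the regularity of each stratum--carry over verbatim, yielding (1) and (2).

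For the area functional $\mathcal A_q$, we expand $\sqrt{1+|\nabla u|^2}=1+\sfrac12|\nabla u|^2+E(|\nabla u|^2)$ with $|E(t)|\le t^2$. By the interior $C^{1,1}$ regularity of solutions of the obstacle problem, $|\nabla u(x_0+ry)|\le Cr$ on $B_1$, so after rescaling the correction $E$ contributes an additional error of order $r^2$ to the almost-monotonicity; for small $r$ this is dominated by $Cr^{\alpha-1}$ and is therefore absorbed into it. The ODE analysis is then identical. \emph{Main obstacle.} The principal technicality is to show that the logarithmic decay of Theorem \ref{t:uniq} survives the added error $Cr^{\alpha-1}$: since the baseline rate at singular points is only logarithmic in $r$, any non-integrable perturbation would spoil the uniqueness of the blow-up, and the hypothesis $\alpha>0$ is precisely the threshold that makes $r^{\alpha-1}$ integrable near $0$. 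For $\mathcal A_q$ an additional subtlety is to verify that the quasi-linear correction is genuinely of higher order at each free-boundary point, which is exactly the role played by the interior $C^{1,1}$ estimate.
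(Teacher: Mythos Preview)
Your proposal is correct and follows essentially the same route as the paper: both freeze the weight at the base point, derive the perturbed differential inequality $e'(r)\ge (c_0/r)\,e(r)^{1+\gamma}-c_1 r^{\alpha-1}$, and then argue that the logarithmic decay survives (the paper makes this step explicit via the substitution $\tilde e(r):=e(r)+2\alpha^{-1}c_1 r^{\alpha}$, which is exactly the ``polynomial contribution of order $r^\alpha$'' you allude to). The treatment of $\mathcal A_q$ is likewise the same---the paper invokes the $W^{2,\infty}$ regularity from \cite{Gerh} to cast the minimizer as an almost-minimizer of $\mathcal E$, which amounts to your Taylor expansion of $\sqrt{1+|\nabla u|^2}$ combined with the $C^{1,1}$ bound.
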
  
Compared to a similar result obtained from the epiperimetric inequality with indirect proof, here we have quantitative estimates as \eqref{eqn:log-cont-fb} and, for the regular set, an explicit H\"older regularity in terms of the dimension and the H\"older exponent of $q$.
  
\subsection*{Organization of the paper}
The paper is divided in four short sections. In Section \ref{s:preliminari} we fix notations and easy preliminary computations. In Section \ref{s:flat} we prove the Weiss epiperimetric inequality Theorem \ref{t:epi:A}, while Section \ref{s:sing} is dedicated to Theorem \ref{t:epi:B}. Finally, in Section \ref{s:regularity} we apply these two theorems to deduce the various regularity results.

\section{Preliminaries}\label{s:preliminari}
In this section we fix some notations and we recall some known facts about the solutions of the obstacle problem, their blow-up limits, the decomposition of the free boundary in a regular and singular part and its realtion with the Weiss boundary adjusted functional. The final subsection is dedicated to the Fourier analysis on the unit sphere in $\R^d$, which will be useful for both Theorems \ref{t:epi:B} and \ref{t:epi:A}.
\subsection{Notations}
We will use the following notations. $B_1$ is the $d$-dimensional unit ball centered in zero and $\omega_d=|B_1|$ is the Lebesgue measure of $B_1$. We denote by $\mathbb{S}^{d-1}$ or $\partial B_1$ the unit $(d-1)$-dimensional sphere in $\R^d$ equipped with the $(d-1)$-dimensional Hausdorff measure $\HH^{d-1}$. $\theta$ will denote the variable on the sphere $\partial B_1$. 
For an open set $\Omega$ in $\R^d$ or on the sphere $\mathbb{S}^{d-1}$ we will denote by $H^1(\Omega)\subset L^2(\Omega)$ the Sobolev space of weakly differentiable functions on $\Omega$ with gradients in $L^2(\Omega;\R^d)$ and by $H^1_0(\Omega)$ the space of functions $H^1(\Omega)$ which are zero on $\partial\Omega$. 

For a function $f:\R^d\to\R$ we denote by $f_+$ its positive part, $f_+(x)=\max\{f(x),0\}$. For instance, given a vector $\nu\in\R^d$ we will often use the notations 
$$(x\cdot\nu)_+=\max\{x\cdot\nu,0\}\qquad\text{and}\qquad (x\cdot\nu)_+^2=\big(\max\{x\cdot\nu,0\}\big)^2,$$ 
where $x\cdot\nu$ is the scalar product of the vectors $x$ and $\nu$ in $\R^d$.

\subsection{Weiss boundary adjusted energy}
For a function $u\in H^1(\Omega)$, with $\Omega\subset \R^d$, we denote by $W$, $W_0$ and $\W$ the functionals  
\begin{gather}
W_0(u,x_0,r):= \frac1{r^{d+2}} \int_{B_r(x_0)}|\nabla u|^2\,dx-\frac2{r^{d+3}}\int_{\partial B_r(x_0)}u^2\,d\HH^{d-1} \,,\notag\\
\W(u,x_0,r):=W_0(u,x_0,r)+\frac1{r^{d+2}}\int_{B_r(x_0)}u(x)\,dx,\notag\\
W(u,x_0,r):=W_0(u,x_0,r)+\frac1{r^{d+2}}\int_{B_r(x_0)}\max\{u(x),0\}\,dx,\notag
\end{gather}
where $x_0\in \de\{u>0\}$ and $0<r<\dist(x_0,\de \Omega)$, and we notice that for non-negative functions $u\in H^1(B_1)$ we have $\W(u,x_0,r)=W(u,x_0,r)$. In particular, we set 
$$W(u,0,r)=W(u,r)\qquad\text{and}\qquad W(u,1)=W(u),$$
and we recall the scaling property 
$$W(u,x_0,r)=W(u_{r,x_0})\,,\quad\text{where}\quad u_{r,x_0}(x)=\frac{u(rx+x_0)}{r^2}.$$ 

For any $u\in H^1(\Omega)$ the following identity holds for $x_0\in \de\{u>0\}$ and $0<r<\dist(x_0,\de \Omega)$
 \begin{equation}\label{e:Weiss_monotonicity}
\frac{d}{dr}W(u,x_0, r)=\frac{d+2}{r} \big[W(z_{r,x_0},1)-W(u_{r,x_0},1)\big]+\frac{1}{r}\int_{\partial B_1}|x\cdot \nabla u_{r,x_0}-2u_{r,x_0}|^2\,d\HH^{d-1}\,,
\end{equation}
where $ z_{r,x_0}(x):=|x|^2\,u_{r,x_0}\big(\frac{x}{|x|}\big)$ (see for instance \cite{Weiss2}).

\subsection{Global homogeneous solutions of the obstacle problem}\label{sub:Theta}
Wa say that the function $u_0:\R^d\to\R$ is a blow-up limit of $u$ in the point $x_0$, if 
$$u_0=\lim_{n\to\infty}u_{r_n,x_0}\qquad\text{for some sequence $(r_n)_n$ with} \, \lim_{n\to\infty}r_n=0,$$
where the converegnce is locally uniform and strong in $H^1_{loc}(\R^d)$.
Thanks to work of Caffarelli (see \cite{Caff_rev}), it is well known that $u_0$ is a global homogeneous solution of the obstacle problem. Precisely, $u_0\in \KK\cup\KK_+$ (introduced in \eqref{defn:K} and \eqref{defn:K+}). Moreover, we claim that 
\begin{equation}\label{e:WThetaK}
W(Q)=\frac{\omega_d}{8\, d (d+2)}=:\Theta\ \text{for every}\ Q\in\KK\,,\quad\text{and}\quad W(q)=\frac{\omega_d}{16\, d (d+2)}=:\Theta_+\ \text{for every}\ q\in\KK_+.
\end{equation}
Indeed, for every $Q_A\in \KK$ we have $\Delta Q_A=2\,\text{tr} A=\frac12$ and so an integration by parts gives
$$W_0(Q_A)=\int_{B_1}|\nabla Q_A|^2-2\int_{\partial B_1}Q_A^2=-\int_{B_1}Q_A\Delta Q_A=-\frac12\int_{B_1}Q_A.$$
Since $Q_A$ is positive and denoting $(a_{ij})_{ij}$ the coefficients of the matrix $A$, we get
$$W(Q_A)=\W(Q_A)=W_0(Q_A)+\int_{B_1}Q_A=\frac12\int_{B_1}Q_A=\frac12\int_{B_1}\sum_{i=1}^d a_{ii} x_i^2\,dx =\frac{\text{tr}A}2\int_{B_1}x_d^2\,dx =\Theta.$$
Analogously, for any $q_\nu\in \KK_+$ we have  $\Delta q_\nu=2\,|\nu|^2=\frac12$ on the set $\{x\cdot\nu>0\}$, so that
$$W(q_\nu)=\W(q_\nu)=W_0(q_\nu)+\int_{B_1}q_\nu=\frac12\int_{B_1}q_\nu
=\frac{|\nu|^2}2\int_{B_1\cap\{x_d>0\}}x_d^2\,dx =\frac{\omega_d}{16\, d (d+2)}=\Theta_+.$$

\subsection{Regular and singular free boundaries} We recall that, as observed by Weiss \cite{Weiss1}, a consequence of \eqref{e:Weiss_monotonicity} is that if $u\in H^1(\Omega)$ is a nonnegative minimizer of $\mathcal{E}$ in the open set $\Omega\subset\R^d$ and $x_0\in\Omega$, then the function $r \mapsto W(u,x_0,r)$ is nondecreasing (in its domain of definition $0<r<\text{dist}(x_0,\partial\Omega)$) and there exists the limit
\begin{equation}\label{e:Theta_u}
\Theta_u(x_0):=\lim_{r\to 0}W(u,x_0,r)=\inf_{r>0}W(u,x_0,r)=\lim_{r\to 0}W(u_{r,x_0})\,.
\end{equation}
Moreover, if $q$ is a blow-up limit of the minimizer $u$ in $x_0$, then 
$$W(q)=\lim_{n\to\infty}W(u_{r_n,x_0})=\Theta_u(x_0).$$
Since we have that $q\in\KK\cup\KK_+$, there are only two possible values for the energy density $\Theta_u(x_0)$: 
$$\Theta_u(x_0)=\Theta_+\qquad\text{or}\qquad \Theta_u(x_0)=\Theta.$$
Hence we can redefine the regular and the singular part of the free boundary $\partial\{u>0\}\cap\Omega$ as 
	\begin{equation*}
	\begin{split}
	{\rm Reg}(u)= \{ x\in \partial \{ u>0\} \cap \Omega: \Theta_u(x) = \Theta_+\},
	\end{split}
	\end{equation*}
	\begin{equation*}
	\begin{split}
	{\rm Sing}(u)=\{ x\in \partial \{ u>0\} \cap \Omega: \Theta_u(x) =\Theta\}.
	\end{split}
	\end{equation*}
By definition the free boundary $\partial \{ u>0\} \cap \Omega$ is a disjoint union of ${\rm Reg}(u)$ and ${\rm Sing}(u)$. Moreover, by the definition of the density \eqref{e:Theta_u} and the fact that $x_0\mapsto W(u,x_0,r)$ is continuous, the function $x_0\mapsto \Theta_u(x_0)$ is upper semicontinuous. This, together with \eqref{e:WThetaK} and the fact that all the blow-up limits are in $\KK\cup\KK_+$, finally gives the following characterization of ${\rm Reg}(u)$ and ${\rm Sing}(u)$:
\begin{itemize}
\item the set ${\rm Reg}(u)$ is a relatively open subset of the free boundary $\partial \{ u>0\}$, and every blow-up limit at a point of ${\rm Reg}(u)$ is of the form $q_\nu$, for some $q_\nu\in\KK_+$;
\item  the set ${\rm Sing}(u)$ is closed, and every blow-up limit at a point of ${\rm Sing}(u)$ is of the form $Q_A$, for some $Q_A\in\KK$.
\end{itemize}

\subsection{Eigenvalues and eigenfunctions on subdomains of the sphere}\label{sub:spectrum} Let $S\subseteq \mathbb{S}^{d-1}$ be an open set. Let $0<\lambda_1\le \lambda_2\le \dots\le\lambda_j\le \dots$ be the eigenvalues (counted with multiplicity) of the spherical Laplace-Beltrami operator with Dirichlet conditions on $\partial S$ and $\{\phi_j\}_{j\ge1}$ be the corresponding eigenfunctions, that is the solutions of the problem
\begin{equation}
\label{defn:eigenval-in-s}
-\Delta_{\mathbb S^{d-1}} \phi_j=\lambda_j\phi_j\quad\text{in}\quad S,\qquad \phi_j=0\quad\text{on}\quad \partial S,\qquad \int_S\phi_j^2(\theta)\,d\HH^{d-1}(\theta)=1.
\end{equation}
Any function $\psi \in H_0^1(S)$ can be decomposed as $ \psi(\theta)=\sum_{j=1}^{\infty}c_j\phi_j(\theta)$. The following lemma compares the energies of $2$-homogeneous and $\alpha$-homogeneous functions by means of the Fourier decomposition of their common values on $\partial B_1$.

\begin{lemma}\label{l:fourier}
	Let $\psi\in H_0^1(S)$ and consider the $2$-homogeneous extension $\varphi (r,\theta)=r^2\psi(\theta)$ and the $\alpha$-homogeneous extension  $\tilde \varphi (r,\theta)=r^\alpha\psi(\theta)$ respectively of $\psi$ to $B_1$, for some $\alpha>2$. Set
	\begin{equation}\label{eps_choice}
	\eps_\alpha:=\frac{\alpha-2}{d+\alpha}\qquad\text{and}\qquad \lambda_\alpha:=\alpha(\alpha+d-2)\,.
	\end{equation}
	Then the following inequality holds 
	\begin{equation}\label{e:W_0}
	W_0(\tilde\varphi)-(1-\eps_\alpha)W_0(\varphi)= \frac{\eps_\alpha}{d+2\alpha-2}\sum_{j=1}^\infty (-\lambda_j+\lambda_\alpha) c_j^2\,.
	\end{equation} 
\end{lemma}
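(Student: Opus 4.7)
The plan is a direct computation in polar coordinates, followed by matching the coefficients of the Fourier series that appear on both sides of \eqref{e:W_0}.

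First I would write the gradient of a $\beta$-homogeneous function $f(r,\theta)=r^\beta\psi(\theta)$ as $|\nabla f|^2=\beta^2 r^{2\beta-2}\psi^2+r^{2\beta-2}|\nabla_\theta\psi|^2$, where $\nabla_\theta$ denotes the tangential gradient on $\mathbb{S}^{d-1}$. Integrating over $B_1$ in polar coordinates $dx=r^{d-1}\,dr\,d\HH^{d-1}(\theta)$ yields
\begin{equation*}
\int_{B_1}|\nabla f|^2\,dx=\frac{1}{2\beta+d-2}\int_{S}\bigl(\beta^2\psi^2+|\nabla_\theta\psi|^2\bigr)\,d\HH^{d-1},
\end{equation*}
while $\int_{\partial B_1}f^2\,d\HH^{d-1}=\int_S\psi^2\,d\HH^{d-1}$.

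Second, I would apply the Fourier expansion $\psi=\sum_{j\ge 1}c_j\phi_j$. Using Green's identity on $S$ (applicable because $\phi_j\in H^1_0(S)$) together with \eqref{defn:eigenval-in-s} gives the Parseval-type identities
\begin{equation*}
\int_S\psi^2\,d\HH^{d-1}=\sum_j c_j^2,\qquad \int_S|\nabla_\theta\psi|^2\,d\HH^{d-1}=\sum_j\lambda_j c_j^2.
\end{equation*}
Combining these with the previous display for $\beta=2$ and $\beta=\alpha$ (and noting that $\lambda_2=2d$ corresponds to the $2$-homogeneous case) expresses the two energies as explicit linear combinations
\begin{equation*}
W_0(\varphi)=\sum_j\frac{\lambda_j-2d}{d+2}\,c_j^2,\qquad W_0(\tilde\varphi)=\sum_j\frac{\alpha^2+\lambda_j-4\alpha-2d+4}{2\alpha+d-2}\,c_j^2.
\end{equation*}

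Finally, the identity \eqref{e:W_0} reduces to equality of the coefficients of $c_j^2$ on both sides, which is an affine identity in $\lambda_j$. Matching the coefficient of $\lambda_j$ gives $(1+\eps_\alpha)(d+2)=(1-\eps_\alpha)(d+2\alpha-2)$, solving exactly to $\eps_\alpha=(\alpha-2)/(d+\alpha)$, as prescribed in \eqref{eps_choice}; matching the constant term is then an elementary check using $\lambda_\alpha=\alpha(\alpha+d-2)$. The step most prone to computational slip is this last coefficient matching, but once $\eps_\alpha$ is pinned down by the $\lambda_j$-linear part the remaining identity is a one-line verification, so there is no substantive obstacle in the argument.
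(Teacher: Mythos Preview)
Your argument is correct and essentially identical to the paper's proof: both compute $W_0$ of a $\beta$-homogeneous extension in polar coordinates, use the Parseval identities coming from \eqref{defn:eigenval-in-s} to obtain $W_0(\tilde\varphi)=\sum_j c_j^2\big(\frac{\alpha^2+\lambda_j}{d+2\alpha-2}-2\big)$ and $W_0(\varphi)=\sum_j c_j^2\big(\frac{4+\lambda_j}{d+2}-2\big)$, and then verify the mode-by-mode algebraic identity. Your coefficient-matching presentation (pinning down $\eps_\alpha$ from the $\lambda_j$-linear part first) is just a reorganization of the same one-line computation the paper carries out directly.
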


\begin{proof}
Since $\| \varphi_j\|_{L^2(\partial B_1)} =1$ and $\|\nabla_{\theta} \varphi_j\|_{L^2(\partial B_1)} =\lambda_j$ for every $j \in \{0\} \cup \N$, the energy of the $\alpha$-homogeneous function $\tilde\varphi(r,\theta)=r^\alpha \psi(\theta)$ can be written as
	\begin{align*}
	W_0(\tilde \varphi)&= \sum_{j=1}^\infty c_j^2\left(\int_0^1 r^{d-1}\,dr\int_S\,d\HH^{d-1} \left[\alpha^2r^{2\alpha-2}\phi_j^2+r^{2\alpha-2}|\nabla_\theta\phi_j|^2\right]-2\int_S \phi_j^2\,d\HH^{d-1}\right)\\
	&= \sum_{j=1}^\infty c_j^2\left(\frac{\alpha^2+\lambda_j}{d+2\alpha-2}-2\right).
	\end{align*} 
When $\alpha=2$ and $\varphi(r,\theta)=r^2 \psi(\theta)$, we get 
	$$W_0(\varphi)=\sum_{j=1}^\infty c_j^2\left(\frac{4+\lambda_j}{d+2}-2\right).$$
	We now notice that for every $\lambda$ we have
	$$\Big(\frac{\alpha^2+\lambda}{d+2\alpha-2}-2\Big)-(1-\eps_\alpha)\Big(\frac{4+\lambda}{d+2}-2\Big)=\frac{\lambda(2-\alpha)}{(d+\alpha)(d+2\alpha-2)}+\frac{(\alpha-2)\alpha (\alpha+d-2)}{(d+\alpha)(d+2\alpha-2)},$$
	which concludes the proof of Lemma \ref{l:fourier}. 
\end{proof}
The above lemma, in particular, shows that if the decomosition of $\psi$ involves only eigenfunctions corresponding to eigenvalues $\lambda_j\ge\lambda_\alpha$, then the $\alpha$-homogeneous extension $\tilde\varphi$ has a strictly lower energy than the two-homogeneous extension $\varphi$. In order to choose appropriately $\alpha$ we will need some additional information on the spectrum of the Laplacian on $S$. We recall that the function $\phi_j:S\to\R$ is a solution of the first equation in \eqref{defn:eigenval-in-s} if and only if its $\alpha_j$-homogeneous extension $\varphi_j(r,\theta)=r^{\alpha_j}\phi_j(\theta)$ is harmonic in the cone 
$\{(r,\theta)\in \R^+\times\partial B_1\ :\ \theta\in S\},$
where the homogeneity $\alpha_j$ is uniquely determined by the identity 
$\lambda_j=\alpha_j(\alpha_j+d-2).$

{\bf The spectrum on the sphere $\mathbb{S}^{d-1}$.} By the fact that the  homogeneous harmonic functions in $\R^d$ are necessarily polynomials, we have that: 
\begin{itemize}
\item $\lambda_1=0$ and the corresponding eigenfunction is the constant $\phi_1=|\partial B_1|^{-1/2}= (d\omega_d)^{-1/2}$. 
\item $\lambda_2=\dots=\lambda_{d+1}=d-1$, the corresponding homogeneity constants are $\alpha_2=\dots=\alpha_{d+1}=1$ and the corresponding
eigenspace  coincides with the space of linear functions in $\R^d$. 
\item $\lambda_{d+2}=\dots=\lambda_{d(d+3)/2}=2d$, the corresponding homogeneity constants are $\alpha_{d+2}=\dots=\alpha_{d(d+3)/2}=2$. The corresponding eigenspace has dimension $d(d-1)/2$ and is generated by the (restrictions to $\mathbb{S}^{d-1}$ of the) two-homogeneous harmonic polynomials:
$$E_{2d}=\{Q_A \colon \R^d \to \R \,:\, Q_A(x) = x \cdot Ax,\,\text{ $A$  symmetric with }{\rm{tr}} A = 0\}.$$
\item If $j> d(d+3)/2$ (that is $\lambda_j>2d$), then $\lambda_j\ge 3(3+d-2)=3(d+1)$. 
\end{itemize}  
{\bf The spectrum on the half-sphere $\partial B_1^+=\{x_d>0\}\cap \partial B_1$.} We notice that the odd extension (with respect to the plane $\{x_d=0\}$) of any eigenfunction $\phi_j$ on the half-sphere $\partial B_1^+$ is an eigenfunction on the entire sphere $\partial B_1$, which is zero on the equator $\{x_d=0\}\cap \partial B_1$. Thus, one can easily deduce that:
\begin{itemize}
\item $\lambda_1=d-1$ and the corresponding eigenfunction is $\phi_1(x)=\frac{x_d}{\sqrt\omega_d}.$
\item $\lambda_2=\dots=\lambda_{d}=2d$, the corresponding homogeneity constants are $\alpha_2=\dots=\alpha_{d}=2$ and the corresponding eigenspace $E_{2d}$ has dimension ($d-1$) and is generated by the polynomials 
$$Q_j(x)=x_d x_{j-1},\qquad\text{for every}\qquad j=2,\dots,d.$$
\item If $j> d$ (that is $\lambda_j>2d$), then $\lambda_j\ge 3(3+d-2)=3(d+1)$. 
\end{itemize}  
{\bf The spectrum on the spherical cap $S_{\delta}=\partial B_1\cap\{x_d>-\delta\}$.} We first notice that the spectrum $\{\lambda_j(\delta)\}_{j\ge 1}$ of the spherical cap $S_\delta$ varies continuously with respect to $\delta$. Thus, for $\delta>0$ small enough (smaller than some dimensional constant), we have 
\begin{itemize}
\item $\lambda_1(\delta)$ is simple (isolated) eigenvalue and $\lambda_1(\delta)\le d-1$;
\item $d-1< \lambda_j(\delta)<2d$, for every $j=2,\dots,d$;
\item $\lambda_j(\delta)\ge 3d$, for every $j> d$. 
\end{itemize}
Moreover, a standard separation of variables argument gives that:
\begin{itemize}
\item the first eigenfunction $\phi_1$ on $S_{\delta}$ is positive and depends only on the first variable $x_d$, that is $\phi_1(x)=\phi_1(x_d).$ 
\item the eigenfunctions $\phi_2,\dots,\phi_d$ correspond to the same eigenvalue $\lambda_2(\delta)=\dots=\lambda_d(\delta)$ and there is  a function $\phi=\phi(x_d)$ such that 
$$\phi_j(x)=x_{j-1} \phi(x_d)\quad \text{for every}\quad j=2,\dots,d.$$ 
\end{itemize}

\section{The epiperimetric inequality at flat points: proof of Theorem \ref{t:epi:A}}\label{s:flat}

In order to prove Theorem \ref{t:epi:A} we decompose the function $z$ as
$$z=q_\nu+\varphi,$$
where $q_\nu(x)=(x\cdot\nu)_+^2$ and $\nu\in\R^d$ to be chosen later.
We then replace the $2$-homogeneous function $\varphi (r,\theta)=r^2\phi(\theta)$ by an $\alpha$-homogeneous function $\tilde\varphi(r,\theta)=r^\alpha\phi(\theta)$, for some $\alpha>2$. The final competitor will be of the form 
$$h=q_\nu+\tilde\varphi,$$ 
and $\eps=\eps_\alpha$ will be given by 
\begin{equation}\label{e:eps}
\eps_\alpha:=\frac{\alpha-2}{d+\alpha}.
\end{equation}
We notice that the competitor $h$ is non-negative in $B_1$, thus we only need to prove the inequality 
\begin{equation}\label{e:tildepi}
 \W(h)-\Theta_+-(1-\eps)\left(\W(z)-\Theta_+\right) \le 0.
\end{equation}
We divide the proof into three steps. 

\smallskip

\noindent {\emph{Step 1}.} Using the properties of $q_\nu$, we first reduce the inequality \eqref{e:tildepi} to a comparison of the energy of  $\tilde\varphi$ to the one of $\varphi$. Precisely, in Subsection \ref{sub:decomposition}, we prove the inequality 
\begin{equation}\label{e:mainest0}
\W(h)-\Theta_+-(1-\eps)\Big(\W(z)-\Theta_+\Big) \le W_0(\tilde \varphi)-(1-\eps)W_0(\varphi).
\end{equation}

\smallskip 

\noindent {\emph{Step 2}.}   In Subsection \ref{sub:choice} we prove that we can choose $q_\nu$ in such a way that the function $\phi:=c-q_\nu$ does not contain modes of the first $d$ eigenvalues on the spherical cap $S_{\delta_0}$. Precisely, we prove the following claim. For every  $\delta_0>0$ there exists  $\delta>0$ such that
\begin{equation}\label{e:claim}
\begin{array}{ll}
\ds\quad\text{for every}\ c\in H^1_0(S_{\delta_0})\ \text{satisfying}\ \|c-q_{e_d/2}\|_{L^2(\partial B_1)} \le \delta\ \text{there exists}\ \nu\in\R^d\ \text{such that}\\
\ds\quad u_\lambda\in H^1_0(S_{\delta_0})\ \text{and}\ \int_{S_{\delta_0}}c(\theta)\phi_j(\theta)\,d\HH^{d-1}(\theta)=\int_{S_{\delta_0}}q_\nu(\theta)\phi_j(\theta)\,d\HH^{d-1}(\theta),\ \text{for every}\ j=1,\dots,d,   
\end{array}
\end{equation}
where $\phi_1,\dots,\phi_d$ are the first $d$, orthonormal in $L^2(\partial B_1)$, eigenfunctions of the Laplace-Beltrami operator on $S_{\delta_0}$ with Dirichlet boundary conditions on $\partial S_{\delta_0}$.  

\smallskip

\noindent {\emph{Step 3}.} In Subsection \ref{sub:fourier} we use Lemma \ref{l:fourier} and the choice of $\nu$ from Step 2 to prove the inequality 
\begin{equation}\label{e:mainest2}
\ds W_0(\tilde \varphi)-(1-\eps)W_0(\varphi)\le 0,
\end{equation}
which together with \eqref{e:mainest0} gives \eqref{e:tildepi}.

\subsection{Decomposition of the energy}\label{sub:decomposition}
We prove \eqref{e:mainest0} in the following lemma:
\begin{lemma}\label{l:decomposition}
Let $\alpha>2$,  $\eps_\alpha$ as in \eqref{e:eps}, $\nu=(\nu_1,\dots,\nu_d)\in\R^d$ and $q_\nu(x)=(x\cdot\nu)_+^2$. Suppose that $\phi\in H^1(\partial B_1)$, $\varphi (r,\theta)=r^2\phi(\theta)$ and $\tilde\varphi(r,\theta)=r^\alpha\phi(\theta)$. Then 
\begin{equation}\label{e:mainest}
\ds \Big(\W(q_\nu+\tilde\varphi)-\Theta_+\Big)-(1-\eps_\alpha)\Big(\W(q_\nu+\varphi)-\Theta_+\Big)\le W_0(\tilde \varphi)-(1-\eps_\alpha)W_0(\varphi).
\end{equation}
\end{lemma}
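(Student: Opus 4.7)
The content of the lemma is purely algebraic: it says that the precise choice $\eps_\alpha=(\alpha-2)/(d+\alpha)$ is exactly what is needed to make every term involving $\phi$ cancel out. The plan is to expand both $\W(q_\nu+\tilde\varphi)$ and $\W(q_\nu+\varphi)$ using the bilinearity of $W_0$, identify the cross terms and the purely linear-in-$\phi$ terms, and verify that this single choice of $\eps_\alpha$ kills all of them at once.

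\emph{Step 1 (Expansion).} First I would use $W_0(q_\nu+u)=W_0(q_\nu)+W_0(u)+2\int_{B_1}\nabla q_\nu\cdot\nabla u-4\int_{\partial B_1}q_\nu u$ for $u\in\{\varphi,\tilde\varphi\}$, together with $\tilde\varphi=\varphi=\phi$ on $\partial B_1$, to rewrite $\W(q_\nu+\tilde\varphi)-(1-\eps_\alpha)\W(q_\nu+\varphi)$ as $\eps_\alpha\W(q_\nu)+W_0(\tilde\varphi)-(1-\eps_\alpha)W_0(\varphi)$ plus three remainders linear in $\phi$: a boundary piece $-4\eps_\alpha\int_{\partial B_1}q_\nu\phi$, a cross term $2\int_{B_1}\nabla q_\nu\cdot\nabla[\tilde\varphi-(1-\eps_\alpha)\varphi]$, and a volume term $\int_{B_1}\tilde\varphi-(1-\eps_\alpha)\int_{B_1}\varphi$.

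\emph{Step 2 (Cancellations).} Next I would compute the three linear remainders. Since $q_\nu\in C^{1,1}(\R^d)$ satisfies $\Delta q_\nu=2|\nu|^2\ind_{\{x\cdot\nu>0\}}$ and $x\cdot\nabla q_\nu=2q_\nu$, integration by parts gives $\int_{B_1}\nabla q_\nu\cdot\nabla u=2\int_{\partial B_1}q_\nu u-2|\nu|^2\int_{B_1\cap\{x\cdot\nu>0\}}u$. Polar integration yields $\int_{B_1}\varphi=\frac{1}{d+2}\int_{\partial B_1}\phi$, $\int_{B_1}\tilde\varphi=\frac{1}{d+\alpha}\int_{\partial B_1}\phi$, and the analogous identities with integration restricted to $\{x\cdot\nu>0\}$. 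The algebraic identity $\frac{1}{d+\alpha}=\frac{1-\eps_\alpha}{d+2}$, equivalent to the defining relation $\eps_\alpha=(\alpha-2)/(d+\alpha)$, simultaneously annihilates the volume remainder and the $\{x\cdot\nu>0\}$-localized contribution to the cross term; the surviving boundary piece of the cross term cancels exactly with $-4\eps_\alpha\int_{\partial B_1}q_\nu\phi$.

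\emph{Step 3 (Conclusion).} After these cancellations, \eqref{e:mainest} reduces to the $\phi$-independent inequality $\eps_\alpha(\W(q_\nu)-\Theta_+)\le 0$. To verify it I would use the explicit formula $\W(q_\nu)=(1-2|\nu|^2)\int_{B_1}q_\nu=(1-2|\nu|^2)\cdot\frac{|\nu|^2\omega_d}{2(d+2)}$, which follows by integration by parts on $\{x\cdot\nu>0\}$. As a function of $|\nu|^2$ this is maximized at $|\nu|^2=\tfrac14$, and the maximum is $\Theta_+$ by the computation in Subsection~\ref{sub:Theta}; hence $\W(q_\nu)\le\Theta_+$ for every $\nu\in\R^d$, which concludes the proof. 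The argument is essentially mechanical: the only ``obstacle'' is bookkeeping, and the instructive point is that the single algebraic relation defining $\eps_\alpha$ forces all the lower-order $\phi$-remainders to vanish simultaneously.
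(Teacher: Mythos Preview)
Your proposal is correct and follows essentially the same approach as the paper's proof. Both arguments expand $\W(q_\nu+\psi)$ bilinearly, integrate the cross term by parts using $\Delta q_\nu=2|\nu|^2\ind_{\{x\cdot\nu>0\}}$ and $\partial_r q_\nu=2q_\nu$, and then observe that the single identity $\frac{1}{d+\alpha}=\frac{1-\eps_\alpha}{d+2}$ kills all the $\phi$-linear remainders; the only cosmetic difference is that the paper first normalizes $q_\nu=c_0\,q_{e_d/2}$ and reads the leftover term directly as $-\eps_\alpha(c_0-1)^2\Theta_+\le 0$, whereas you keep $\nu$ general and phrase the same fact as the maximization $\W(q_\nu)\le\Theta_+$ (indeed $\W(q_\nu)-\Theta_+=-(4|\nu|^2-1)^2\Theta_+$, so the two remainders coincide).
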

\begin{proof}
Suppose, without loss of generality that, $\ds q_\nu(x)=c_0q(x)$, where for the sake of simplicity we set $q:=q_{e_d/2}\in\KK_+$. 
Notice that for every $\psi\in H^1(B_1)$ we have 
\begin{align*}
\W(c_0q+\psi)-&\Theta_+=c_0^2\int_{B_1}|\nabla q|^2-2c_0^2\int_{\partial B_1}q^2+c_0\int_{B_1}q-\frac12\int_{B_1}q\\
&\qquad\qquad+2c_0\left(\int_{B_1}\nabla q\cdot\nabla\psi-2\int_{\partial B_1}q\psi\right)
+\int_{B_1}|\nabla \psi|^2-2\int_{\partial B_1}\psi^2
+\int_{B_1}\psi\\
&=-\frac{(c_0-1)^2}2\int_{B_1}q+2c_0\left(-\int_{B_1}\Delta q\,\psi+\int_{\partial B_1} \partial_r q\,\psi-2\int_{\partial B_1}q\psi\right)+W_0(\psi)+\int_{B_1}\psi\\
&=-(c_0-1)^2\Theta_++W_0(\psi)+\int_{B_1}\psi-c_0\int_{B_1^+}\psi\,,
\end{align*} 
where we used that
$
\Theta_+=\frac12 \int_{B_1}q$,  $\Delta q=\frac12\,\chi_{B_1^+}$
and $\de_r q=2\,q$.
If $\psi=\tilde\varphi=r^2\,\phi$, then we have 
$$\int_{B_1}\tilde\varphi-c_0\int_{B_1^+}\tilde\varphi=\frac1{d+\alpha}\left(\int_{\partial B_1}\phi-c_0\int_{\partial B_1^+}\phi\right)=:\frac1{d+\alpha}\beta(\phi),$$
and we can write the energy of $c_0q+\tilde\varphi$ in the form
\begin{align*}
\W(c_0q+\tilde\varphi)-\Theta_+=-(c_0-1)^2\Theta_++W_0(\tilde\varphi)+\frac{1}{d+\alpha}\beta(\phi).
\end{align*} 
Applying the above estimate to $\varphi$ and $\tilde\varphi$ and thanks to the definition of $\eps_\alpha$, we get  
\begin{align*}
\W(c_0q+\tilde \varphi)&-\Theta_+-(1-\eps_\alpha)\big(\W(c_0q+ \varphi)-\Theta_+\big)\\
&=-\eps_\alpha (c_0-1)^2\Theta_++W_0(\tilde \varphi)-(1-\eps_\alpha)W_0(\varphi)+\left(\frac1{d+\alpha}-\frac{1-\eps_\alpha}{d+2}\right)\beta(\phi)\\
&=-\eps_\alpha (c_0-1)^2\Theta_++W_0(\tilde \varphi)-(1-\eps_\alpha)W_0(\varphi),
\end{align*} 
which concludes the proof of Lemma \ref{l:decomposition}. 
\end{proof}

\subsection{Choice of $\nu$}\label{sub:choice}
In this section we prove the claim \eqref{e:claim}, which is a straightforward consequence of the following lemma.
\begin{lemma}\label{l:inversa}
Given $\delta_0>0$, we denote by $S_{\delta_0}$ the set $\{x_d>-\delta_0\}\cap\partial B_1$ and by $\phi_1,\dots,\phi_d$ the first $d$ eigenfunctions on the set $S_{\delta_0}$. Then the function 
$$F:\R^d\to\R^d,\qquad F(\nu) = \left(\int_{S_{\delta_0}}q_\nu\phi_1,\dots,\int_{S_{\delta_0}}q_\nu\phi_d \right),$$
is a $C^1$ diffeomorphism in a neighbourhood $U\subset\R^d$ of $\ds\frac{e_d}2$. 
\end{lemma}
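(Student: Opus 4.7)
The plan is to apply the inverse function theorem to $F$ at the point $\nu_0 = e_d/2$. This requires verifying that $F$ is of class $C^1$ in a neighborhood of $\nu_0$ and that the differential $dF(\nu_0)\in\R^{d\times d}$ is invertible. For the $C^1$ regularity I would use that $t\mapsto (t_+)^2$ is $C^1(\R)$ with derivative $2t_+$, so for every fixed $x\in\partial B_1$ the map $\nu\mapsto q_\nu(x)$ is $C^1$ with gradient $2(x\cdot\nu)_+\,x$, uniformly bounded as $\nu$ varies in a bounded set; dominated convergence then lets me differentiate under the integral to obtain
\begin{equation*}
\partial_{\nu_j} F_i(\nu)=\int_{S_{\delta_0}} 2(x\cdot\nu)_+\,x_j\,\phi_i(x)\,d\HH^{d-1}(x).
\end{equation*}
Evaluating at $\nu_0$ and using $(x\cdot e_d/2)_+ = (x_d)_+/2$ yields
\begin{equation*}
\partial_{\nu_j} F_i(\nu_0)=\int_{S_{\delta_0}\cap\{x_d>0\}} x_d\,x_j\,\phi_i(x)\,d\HH^{d-1}(x).
\end{equation*}

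The second step is to exploit the explicit description of $\phi_1,\dots,\phi_d$ recalled at the end of Subsection~\ref{sub:spectrum}: for $\delta_0$ small the first eigenfunction $\phi_1$ depends only on $x_d$ and is strictly positive, while $\phi_j(x)=x_{j-1}\phi(x_d)$ for $j=2,\dots,d$ and a common radial profile $\phi$ (this is a legitimate choice in the $(d-1)$-dimensional eigenspace, and a different orthonormal choice only modifies $F$ by a linear automorphism of the codomain, which does not affect the invertibility of $dF(\nu_0)$). Substituting into the formula, I integrate over the hemispherical cap $\partial B_1\cap\{x_d>0\}$, which is symmetric under each reflection $x_k\mapsto -x_k$ for $k=1,\dots,d-1$. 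A parity argument then forces every entry of $dF(\nu_0)$ to vanish except the anti-diagonal ones, namely the entries $(1,d)$ and $(i,i-1)$ for $i=2,\dots,d$, whose values are
\begin{equation*}
a_1:=\int_{\partial B_1\cap\{x_d>0\}} x_d^2\,\phi_1(x_d)\,d\HH^{d-1},\qquad
a_i:=\int_{\partial B_1\cap\{x_d>0\}} x_d\,x_{i-1}^2\,\phi(x_d)\,d\HH^{d-1}\ (i\ge 2).
\end{equation*}

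The main obstacle is showing that each $a_i$ is nonzero, since the matrix will then have determinant $\pm\prod_i a_i\ne 0$ (the nonzero pattern realizes a single $d$-cycle). For $a_1$ this is immediate from $\phi_1>0$. For $a_i$ with $i\ge 2$ I would argue by continuous dependence of the eigenfunctions on the spherical domain $S_{\delta_0}$: in the limit $\delta_0\to 0$ the cap degenerates to the half-sphere $\partial B_1^+$, where Subsection~\ref{sub:spectrum} identifies the second eigenspace as spanned by $x_d x_{j-1}$, i.e.\ $\phi(x_d)$ becomes a positive multiple of $x_d$ on $\{x_d>0\}$. Thus for $\delta_0$ small enough $\phi(x_d)$ remains strictly positive on the bulk of $\{x_d>0\}$, forcing $a_i>0$. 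Invertibility of $dF(\nu_0)$ then follows, and the inverse function theorem provides a $C^1$ diffeomorphism neighborhood $U$ of $e_d/2$, concluding the proof.
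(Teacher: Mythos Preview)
Your proof is correct and follows essentially the same approach as the paper's own proof: you compute the Jacobian of $F$ at $e_d/2$ by differentiating under the integral, use the separated-variable structure $\phi_1=\phi_1(x_d)$, $\phi_j(x)=x_{j-1}\phi(x_d)$ together with parity in $x_1,\dots,x_{d-1}$ to reduce $dF(\nu_0)$ to a cyclic-permutation pattern, and then argue positivity of the surviving entries via $\phi_1>0$ and the convergence of $\phi$ to (a multiple of) $x_d^+$ as $\delta_0\to 0$, before invoking the inverse function theorem. The only cosmetic difference is that the paper rewrites the $a_i$ for $i\ge 2$ as $\frac{1}{d-1}\int_{\partial B_1^+}(1-x_d^2)x_d\,\phi(x_d)$ to make them manifestly equal, while you keep them in the form $\int x_d\,x_{i-1}^2\,\phi(x_d)$; your parenthetical remark that another orthonormal basis of the second eigenspace changes $F$ only by a linear automorphism is a nice clarification the paper leaves implicit.
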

\begin{proof}[Proof of Lemma \ref{l:inversa}]

We first notice that $F$ is a $C^1$ function in a neighborhood of $e_d/2$, because the function 
$\R^d\times\R^d\ni (x,y)\mapsto (x\cdot y)_+^2$
is $C^1$. We now calculate the partial derivatives of $F=(F_1,\dots,F_d)$ in $e_d/2$. Using the fact that the first eigenfunction depends only on one varibale, $\phi_1=\phi_1(x_d)$, and that the higher eigenfunctions can be written in the form $\phi_j(x)=x_{j-1}\phi(x_d)$, for every $j=2,\dots,d$ (see Subsection \ref{sub:spectrum}), we get that 
$$F_1(\nu)=\int_{\partial B_1}q_\nu(x)\,\phi_1(x_d)\,dx\qquad\text{and}\qquad F_j(\nu)=\int_{\partial B_1}q_\nu(x)\,x_{j-1}\phi(x_d)\,dx,\quad\forall j=2,\dots,d. $$
Setting $\partial B_1^+=\{x_d>0\}\cap\partial B_1$ we have 
$$\frac{\partial F_1}{\partial \nu_d}(e_d/2)=\int_{\partial B_1^+}x_d^2\phi_1(x_d)\,dx>0,$$
$$\frac{\partial F_1}{\partial \nu_j}(e_d/2)=\int_{\partial B_1^+}x_d x_j\phi_1(x_d)\,dx=0,\  \forall j=1,\dots,d-1.$$
where the positivity of the first term follows from the positivity of $\phi_1$, while the second term is zero since $x_j$ is odd. Moreover, for every $j=2,\dots,d$ and $i=1,\dots,d$, we have 
\begin{align*}
\frac{\partial F_j}{\partial \nu_i}(e_d/2)&=\int_{\partial B_1^+}\!\!\!x_i x_d x_{j-1}\phi(x_d)\,dx=\delta_{i(j-1)}\int_{\partial B_1^+} \!\!\!x_i^2 x_d \phi(x_d)\,dx=\frac{\delta_{i(j-1)}}{d-1}\int_{\partial B_1^+} \!\!\!(1-x_d^2) x_d\phi(x_d)\,dx\,,
\end{align*}
where we used the fact that $x_i$ and $x_j$ are odd for the first equality, and $\int x_j^2=\frac{1}{d-1}\sum_{j=1}^{d-1}\int x_j^2=\frac1{d-1}\int(1-x_d^2)$.
By the positivity of $\phi$ and the fact that $\lim_{\delta_0\to0}\|\phi-c_2x_d^+\|_{L^2(\partial B_1)}=0$, for dimensional constants $c_1$ and $c_2$ (which is due to the fact that on the half-sphere $\partial B_1^+$ the eigenfunctions are of the form $\phi_j(x)=c_2x_dx_{j-1}$ for $j=2,\dots,d$) we get that for $\delta_0$ small enough $DF(e_d/2)$ is an invertible matrix and so, by the inverse function theorem there is a neighborhood of $e_d/2$ on which $F$ is a $C^1$ diffeomorphism. 
\end{proof}

\subsection{Homogeneity improvement of $\varphi$}\label{sub:fourier}

We prove \eqref{e:mainest2}. Indeed, by the fact that the Fourier expansion of $\phi(\theta):=c(\theta)-q_\nu(\theta)$ does not contain the first $d$ modes $\phi_1,\dots,\phi_d$ on the spherical cap $S_{\delta_0}$ (claim \eqref{e:claim}), we obtain that the function $\phi$ can be expanded in Fourier series as
$$\phi(\theta)=\sum_{j=d+1}^{\infty}c_j\phi_j(\theta)\quad\text{on the spherical cap}\quad S_{\delta_0}=\partial B_1\cap \{x_d>-\delta_0\}.$$
Thus, by Lemma \ref{l:fourier} we get
\begin{align*}
W_0(\tilde\varphi)-(1-\eps_\alpha)W_0(\varphi)&=\frac{\eps_\alpha}{d+2\alpha-2}\sum_{j=d+1}^\infty (-\lambda_j+\lambda_\alpha) c_j^2,
\end{align*}
where $\lambda_j$ are the eigenvalue of the Dirichlet Laplacian on $S_{\delta_0}$ and $\lambda_\alpha=\alpha(\alpha+d-2)$. 
On the other hand, for $\delta_0>0$ small enough, we have that $\lambda_j\ge 3d$,  for $j\ge d+1$ (see Subsection \ref{sub:spectrum}), so that $-\lambda_j+\lambda_\alpha\leq 0$ whenever $\alpha>2$ and $\alpha (\alpha+d-2)\le 3d$.
. Thus, choosing for instance
$$\alpha=\frac52\qquad\text{and}\qquad \eps=\frac{\alpha-2}{d+\alpha}=\frac{1}{2d+5},$$ 
we conclude the proof of \eqref{e:mainest2} and Theorem \ref{t:epi:A}.

\section{The epiperimetric inequality for singular points: proof of Theorem \ref{t:epi:B}}\label{s:sing}

We first notice that given any two-homogeneous function $z(r,\theta)=r^2\,c(\theta)$, we can decompose it in Fourier series on the sphere $\partial B_1$ as 
\begin{align*}
c(\theta)=\sum_{j=1}^\infty c_j \phi_j(\theta)=\; c_1\phi_1(\theta)\quad+\sum_{\{j\,:\,\lambda_j=d-1\}} c_j\,\phi_j(\theta)\quad+\sum_{\{j\,:\,\lambda_j=2d\}} c_j\,\phi_j(\theta)\quad+\sum_{\{j\,:\, \lambda_j>2d\}} c_j \phi_j(\theta)\,.
\end{align*}
Therefore $z$ can be decomposed in a unique way as
$$z=q_\nu+Q_A+\varphi,$$
where 
\begin{enumerate}[(i)]
\item $\nu\in\R^d$ is such that $q_\nu(x)=(x\cdot\nu)_+^2$ contains in its Forurier expansion precisely the sum $$\ds\sum_{\{j\,:\,\lambda_j=d-1\}} c_j\,\phi_j(\theta) \ ;$$
\item $A$ is a symmetric matrix depending on the coefficients $c_j$, corresponding to the eigenvalues $\lambda_j=0,d-1,2d$, and $Q_A(x)=x\cdot Ax$;
\item $\varphi$ is a two-homogeneous function, in polar coordinates $\varphi(r,\theta)=r^2\phi(\theta)$, containing only higher modes on $\partial B_1$, that is the trace $\phi$ can be written in the form  
\begin{equation}\label{e:fourier_phi}
\phi(\theta)=\sum_{\{j\,:\, \lambda_j>2d\}} c_j \phi_j(\theta),
\end{equation}
where $\{\phi_j\}_{j\in\N}$ are the eigenfunctions of the spherical laplacian as in \eqref{defn:eigenval-in-s} with $S= \partial B_1$.
\end{enumerate}

Notice that, in the above representation $A$ might not be positive definite. Let $B$ be a symmetric positive definite matrix and $Q_B(x)=x\cdot Bx$. Then, $z$ can be rewritten as 
$$z=q_\nu+ Q_B+ (Q_A-Q_B)+\varphi\,.$$
We then replace the $2$-homogeneous function $\psi:=(Q_A-Q_B)+\varphi$ by an $\alpha$-homogeneous function $\tilde\psi$ with the same boundary values as $\psi$. 
We will choose $\alpha>2$ such that
\begin{equation}\label{e:choice_of_alpha}
\eps_\alpha:=\frac{\alpha-2}{d+\alpha}=\ds\eps \left(C_4\|\nabla_\theta \phi\|_{L^2(\partial B_1)}^{2}\right)^\gamma,
\end{equation}
where $C_4$ is the dimensional constant from the inequality \eqref{e:sing:final} and $\gamma$ is the constant from \eqref{e:epi_flat_point_sing}. Subsequently we will choose $\eps$ to be small enough, but yet depending only on the dimension.  

Finally, the competitor $h$ is given by
$$h=q_\nu+Q_B+ \tilde\psi.$$ 
Since $\inf\{\psi,0\}\le \inf\{\tilde\psi,0\}$ and $q_\nu+Q_B\ge 0$, by the choice of $B$, $h$ is non-negative in $B_1$ and so we only need to prove the inequality
$$
W(h)\le W(z)-\eps\big(W(z)-\Theta\big)^{1+\gamma}\,.
$$ 
The proof of Theorem \ref{t:epi:B} will be carried out in four steps.\\
\smallskip 

\noindent {\emph{Step 1.}} In Subsection \ref{sub:sing:decomposition} we set  $ c_0=4\sum_{j=1}^d\nu_j\ $ and $\ \ds b=4\,\text{tr}B$ and we prove the identity 
\begin{align}
W(h)&-\Theta-(1-\eps_\alpha)\left(W(z)-\Theta\right)\label{e:sing:energy}\\
&=-\frac{\eps_\alpha}2\left((1-b-c_0)^2+(1-b)^2\right)\Theta+W_0(\tilde\psi)-(1-\eps_\alpha)W_0(\psi).\nonumber
\end{align}

\smallskip

\noindent{\emph{Step 2.}} We now choose $Q_B$. If $A$ is positive definite ($Q_A\ge 0$), then we choose $B=A$. If $Q_A$ changes sign, then up to a change of coordinates we can assume that there exist $ a_j\ge 0$ for every $j=1,\dots,d $ such that
$$Q_A(x)=-\sum_{j=1}^k a_jx_j^2+\sum_{j=k+1}^da_j x_j^2, \qquad \quad 
 a_d\ge \frac1{4d}> \sum_{j=1}^k a_j,$$
where the last inequality being due to the fact that $Q_A$ is $L^2(\partial B_1)$-close to the set of admissible blow-ups $\KK$. 
We set
\begin{equation}\label{e:Q_B}
Q_B(x):=\sum_{j=k+1}^d a_jx_j^2-\Big(\sum_{j=1}^k a_j\Big)x_d^2\geq 0\,
\end{equation}
where the last inequality, that is the positive definiteness of $B$, depends on $a_d\ge \frac1{4d}> \sum_{j=1}^k a_j$.
In Subsection \ref{sub:sing:main} we then prove that there exists a dimensional constant $C_2>0$ such that
\begin{equation}\label{e:sing:main}
W_0(\tilde\psi)-(1-\eps_\alpha)W_0(\psi)\le \eps_\alpha^2 C_2\sum_{j=1}^k a_j^2-\frac{\eps_\alpha}{2} \|\nabla_\theta \phi\|_{L^2(\partial B_1)}^2.
\end{equation}

\noindent{\it Step 3.} In Subsection \ref{sub:sing:pre-final} we prove that there are dimensional constants $C_3>0$ and $\gamma\in[0,1)$ such that 
\begin{equation}\label{e:sing:pre-final}
\sum_{j=1}^k a_j^2\le C_3\|\nabla_\theta \phi\|_{L^2(\partial B_1)}^{2(1-\gamma)}.
\end{equation}
In the supplementary Subsection \ref{sub:sing:complementary} we show that this estimate can be improved in several ways:
\begin{enumerate}[-]
\item in dimension two \eqref{e:sing:pre-final} holds with $\gamma=0$;
\item the dimensional constants $\gamma$ can be replaced by a (smaller) constant $\gamma_k$, this time depending on $d$ and $k$. In the two extremal cases $k=0$ and $k=d-1$ the constant is zero. \\
\end{enumerate}

\noindent{\it Step 4.} In Subsection \ref{sub:sing:final} we prove that there is a dimensional constant $C_4$, such that 
\begin{equation}\label{e:sing:final}
W(z)-\Theta\le C_4\|\nabla_\theta \phi\|_{L^2(\partial B_1)}^2.
\end{equation}

\noindent{\it Conclusion of the proof.} The proof of Theorem \ref{t:epi:B} now follows directly by  \eqref{e:sing:energy}, \eqref{e:sing:main}, \eqref{e:sing:pre-final}  and \eqref{e:sing:final}. Indeed, by \eqref{e:sing:main} and \eqref{e:sing:pre-final} we get that 
\begin{align*}
W_0(\tilde\psi)-(1-\eps_\alpha)W_0(\psi)&\le \eps_\alpha^2 C_2\sum_{j=1}^k a_j^2-\frac{\eps_\alpha}2\|\nabla_\theta \phi\|_{L^2(\partial B_1)}^2\\
&\le \eps_\alpha^2 C_2C_3\|\nabla_\theta \phi\|_{L^2(\partial B_1)}^{2(1-\gamma)}-\frac{\eps_\alpha}2\|\nabla_\theta \phi\|_{L^2(\partial B_1)}^2.
\end{align*}
By the definition of $\eps_\alpha$ and \eqref{e:sing:final} we get
\begin{align*}
W_0(\tilde\psi)-(1-\eps_\alpha)W_0(\psi)&\le \eps^2C_4^{2\gamma}\|\nabla_\theta \phi\|_{L^2(\partial B_1)}^{4\gamma} C_2C_3\|\nabla_\theta \phi\|_{L^2(\partial B_1)}^{2(1-\gamma)}-\frac{\eps}2 C_4^{\gamma}\|\nabla_\theta \phi\|_{L^2(\partial B_1)}^{2\gamma} \|\nabla_\theta \phi\|_{L^2(\partial B_1)}^2\\
&= \eps C_4^{\gamma}\left(\eps C_2C_3 C_4^{\gamma}-\frac12\right) \|\nabla_\theta \phi\|_{L^2(\partial B_1)}^{2+2\gamma},
\end{align*}
which is negative for $\eps$ small enough (but yet, $\eps$ depends only on the dimension). Finally, by \eqref{e:sing:energy}, the definition of $\eps_\alpha$ and \eqref{e:sing:final} we obtain
\begin{align*}
W(h)-\Theta&\le \left(1-\eps_\alpha\right)\big(W(z)-\Theta\big)
= \left(1-\eps\, C_4^\gamma\, \|\nabla_\theta \phi\|_{L^2(\partial B_1)}^{2\gamma}\right)\big(W(z)-\Theta\big)\\
& \le \Big(1-\eps\,\big(W(z)-\Theta\big)^\gamma\Big)\big(W(z)-\Theta\big),
\end{align*}
which is precisely \eqref{e:epi_flat_point_sing}. We now proceed with the proof of \eqref{e:sing:energy}, \eqref{e:sing:main}, \eqref{e:sing:pre-final}  and \eqref{e:sing:final}.

\subsection{Decomposition of the energy}\label{sub:sing:decomposition} We prove the following lemma, which implies easily \eqref{e:sing:energy}.
\begin{lemma}\label{l:sing:decomposition}
Let $\alpha>2$ and $\eps_\alpha=\frac{\alpha-2}{d+\alpha}$; let $0\neq \nu=(\nu_1,\dots,\nu_d)\in\R^d$, $q_\nu(x)=(x\cdot\nu)_+^2$, $c_0=4\sum_{j=1}^d\nu_j^2$; let $B$ be a symmetric matrix with and $b=4\,\text{tr}B\neq0$ and $Q_B(x)=x\cdot Bx$. Suppose that $\phi\in H^1(\partial B_1)$, $\varphi (r,\theta)=r^2\phi(\theta)$ and $\tilde\varphi(r,\theta)=r^\alpha\phi(\theta)$. Then 
\begin{align*}
\ds \W(q_\nu+Q_B+\tilde\varphi)&-\Theta-(1-\eps_\alpha)\Big(\W(q_\nu+Q_B+\varphi)-\Theta\Big)\\
&= -\frac{\eps_\alpha}2\left((1-b-c_0)^2+(1-b)^2\right)\Theta+W_0(\tilde \varphi)-(1-\eps_\alpha)W_0(\varphi).
\end{align*}
\end{lemma}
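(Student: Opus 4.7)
The plan is to follow the blueprint of Lemma \ref{l:decomposition}: expand $\W(q_\nu+Q_B+\psi)$ for a generic perturbation $\psi\in H^1(B_1)$, identify a base contribution depending only on $(c_0,b)$, a quadratic piece $W_0(\psi)$, and a piece linear in $\psi$; then specialize to $\psi=\varphi$ and $\psi=\tilde\varphi$, so that the choice $\eps_\alpha=(\alpha-2)/(d+\alpha)$ forces the linear pieces to cancel in the stated combination.

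For the first step I would compute the mixed terms in $\W(q_\nu+Q_B+\psi)$ using that $q_\nu+Q_B$ is $2$-homogeneous, so $\partial_r(q_\nu+Q_B)=2(q_\nu+Q_B)$ on $\partial B_1$. An integration by parts gives
$$2\int_{B_1}\nabla(q_\nu+Q_B)\cdot\nabla\psi \;=\; 4\int_{\partial B_1}(q_\nu+Q_B)\psi-2\int_{B_1}\Delta(q_\nu+Q_B)\,\psi,$$
and the first term on the right cancels the $-4\int_{\partial B_1}(q_\nu+Q_B)\psi$ produced by expanding the boundary square. Since $\Delta q_\nu=\tfrac{c_0}{2}\chi_{\{x\cdot\nu>0\}}$ and $\Delta Q_B=\tfrac{b}{2}$, I obtain
$$\W(q_\nu+Q_B+\psi)=\W(q_\nu+Q_B)+W_0(\psi)+(1-b)\int_{B_1}\psi-c_0\int_{B_1\cap\{x\cdot\nu>0\}}\psi.$$

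In the second step I would apply the formula above with $\psi(r,\theta)=r^s\phi(\theta)$, where $s=2$ or $s=\alpha$. By homogeneity the two bulk integrals reduce to boundary ones with common prefactor $1/(d+s)$, so the $\phi$-linear contribution is $\frac{1}{d+s}\ell(\phi)$, with
$$\ell(\phi):=(1-b)\int_{\partial B_1}\phi-c_0\int_{\partial B_1\cap\{x\cdot\nu>0\}}\phi.$$
In the combination $\W(q_\nu+Q_B+\tilde\varphi)-(1-\eps_\alpha)\W(q_\nu+Q_B+\varphi)$ this term is weighted by $\frac{1}{d+\alpha}-\frac{1-\eps_\alpha}{d+2}$, which vanishes by the very definition of $\eps_\alpha$, exactly as in Lemma \ref{l:decomposition}. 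What survives is
$$\eps_\alpha\,\W(q_\nu+Q_B)+W_0(\tilde\varphi)-(1-\eps_\alpha)W_0(\varphi),$$
and the claim reduces to the $\phi$-free identity $\W(q_\nu+Q_B)-\Theta=-\tfrac12\big[(1-b-c_0)^2+(1-b)^2\big]\Theta$.

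For the final step I would compute $\W(q_\nu+Q_B)$ directly. Integration by parts and 2-homogeneity give $W_0(q_\nu+Q_B)=-\int_{B_1}\Delta(q_\nu+Q_B)(q_\nu+Q_B)$; the only asymmetric piece that appears is $\int_{B_1\cap\{x\cdot\nu>0\}}Q_B$, which by the evenness of $Q_B$ equals $\tfrac12\int_{B_1}Q_B$, while $q_\nu$ is supported in $\{x\cdot\nu\geq 0\}$. Using $\int_{B_1}q_\nu=c_0\Theta$ and $\int_{B_1}Q_B=2b\Theta$, both elementary consequences of $\int_{B_1}x_i^2=\omega_d/(d+2)$, one gets a quadratic polynomial in $(c_0,b)$, which by expansion matches $-\tfrac12[(1-b-c_0)^2+(1-b)^2]\Theta$. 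The argument is entirely algebraic; the only delicate point is the evenness computation for $Q_B$ on the half-space, reflecting the asymmetry between $q_\nu$ (supported in $\{x\cdot\nu\ge 0\}$) and the globally defined $Q_B$.
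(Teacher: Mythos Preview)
Your proposal is correct and follows essentially the same strategy as the paper's proof: both hinge on expanding $\W(q_\nu+Q_B+\psi)$ via integration by parts and $2$-homogeneity, arriving at the identity
\[
\W(q_\nu+Q_B+\psi)-\Theta=-\tfrac12\big[(1-b-c_0)^2+(1-b)^2\big]\Theta+W_0(\psi)+\beta(\psi),
\]
with $\beta(\psi)=(1-b)\int_{B_1}\psi-c_0\int_{B_1\cap\{x\cdot\nu>0\}}\psi$, and then observing that $\beta(\tilde\varphi)-(1-\eps_\alpha)\beta(\varphi)=0$ by the definition of $\eps_\alpha$. The only cosmetic difference is bookkeeping: the paper reaches this identity by first peeling off $Q_B$ (computing $\W(bQ+\eta)-\Theta$ for generic $\eta$) and then peeling off $q_\nu$, whereas you expand $q_\nu+Q_B$ in one step and compute the base value $\W(q_\nu+Q_B)-\Theta$ directly using the evenness of $Q_B$ on the half-ball; the computations are otherwise identical.
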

\begin{proof}
We $Q=\frac1b Q_B$. Thus, we have $Q\in\KK$. In particular, $\Delta Q=1/2$ in $B_1$ and $W(Q)=\Theta$. 
We notice that for every function $\eta\in H^1(B_1)$, we have 
\begin{align*}
\W(bQ+\eta)-\Theta&=b^2W_0(Q)+W_0(\eta)+2b\left(\int_{B_1}\nabla Q\cdot\nabla\eta-2\int_{\partial B_1}Q\eta\right)+b\int_{B_1}Q+\int_{B_1}\eta-\Theta\\
&=b^2W_0(Q)+W_0(\eta)-2b\int_{B_1}\eta\Delta Q+b\int_{B_1}Q+\int_{B_1}\eta-\Theta,
\end{align*}
which gives 
\begin{equation}\label{e:saluta_andonio}
\W(bQ+\eta)-\Theta=-(1-b)^2\Theta+W_0(\eta)+(1-b)\int_{B_1}\eta.
\end{equation}
We set $q=\frac1{c_0}q_\nu$. Thus $q\in\KK_+$ and $W(q)=\Theta_+=\frac12\int_{B_1} q$. 
Setting $\eta=c_0q+\psi$ in \eqref{e:saluta_andonio} we obtain 
\begin{align}
\W(bQ+c_0q+\psi)&-\Theta=-(1-b)^2\Theta+W_0(c_0q+\psi)+(1-b)\int_{B_1}(c_0q+\psi)\nonumber\\
&=-(1-b)^2\Theta+c_0^2W_0(q)+W_0(\psi)\nonumber\\
&\quad +2c_0\left(\int_{B_1}\nabla q\cdot\nabla\psi-2\int_{\partial B_1}q\psi\right)+(1-b)\int_{B_1}(c_0q+\psi)\nonumber\\
&=-(1-b)^2\Theta-c_0^2\frac{\Theta}2+W_0(\psi) -2c_0\int_{B_1}\psi\Delta q+(1-b)c_0\Theta+(1-b)\int_{B_1}\psi\nonumber\\ 
&=-\frac{(1-b-c_0)^2+(1-b)^2}2\Theta+W_0(\psi)+\beta(\psi),
\label{e:sing:decomposition}
\end{align}
where in the last line we set 
$$\beta(\psi):=(1-b)\int_{B_1}\psi-c_0\int_{B_1^+}\psi.$$
Taking $\tilde \psi$ to be the $\alpha$-homogeneous extension of $\psi$, we get that  $\beta(\tilde\psi)-(1-\eps_\alpha)\beta(\psi)=0$, which concludes the proof of the lemma.
\end{proof}

\subsection{Homogeneity improvement of $\psi$}\label{sub:sing:main}
In this subsection we prove the inequality \eqref{e:sing:main}. 

We first notice that if $Q_A$ is non-negative, then we can choose $Q_B=Q_A$ and $\sum_{j=1}^k a_j^2=0$. Thus, \eqref{e:sing:main} follows directly by Lemma \ref{l:fourier} and the fact that for the eigenvalues on the sphere $\lambda_j>2d$ implies $\lambda_j\ge 3(d+1)$.

In the rest of this subsection, we assume that $Q_A$ changes sign and $Q_B$ is given by \eqref{e:Q_B}.  In particular, $Q_A-Q_B$ is a homogeneous polynomial of second degree with $\Delta Q_A-Q_B =0$, so it is an element of the eigenspace $E_{2d}$, corresponding to the eigenvalue $2d$. We choose $\phi_2\in E_{2d}$ and $c_2\in\R$ such that
$$Q_A-Q_B=c_2\phi_2\,,\quad\text{where}\quad\int_{\partial B_1}\phi_2^2(\theta)\,d\HH^{d-1}(\theta)=1.$$ 
Thus, on $\partial B_1$ we can write $\psi$ as 
$$\psi(\theta)=c_2\phi_2(\theta)+\phi(\theta)=c_2\phi_2(\theta)+\sum_{\{j\,:\, \lambda_j>2d\}} c_j \phi_j(\theta).$$
Applying Lemma \ref{l:fourier} we have 
\begin{align*}
W_0(\tilde\psi)-(1-\eps_\alpha)W_0(\psi)
&= \frac{\eps_\alpha}{d+2\alpha-2}\Big((\alpha-2)(d+\alpha)c_2^2+\sum_{\{j\,:\, \lambda_j>2d\}} (-\lambda_j+\lambda_\alpha) c_j^2\Big)\nonumber\\
&= \frac{\eps_\alpha^2(d+\alpha)^2}{(d+2\alpha-2)}c_2^2+\frac{\eps_\alpha}{d+2}\sum_{\{j\,:\, \lambda_j>2d\}} (-\lambda_j+\lambda_\alpha) c_j^2\nonumber
\end{align*}
Choosing the constant $\eps$ small enough, the equation \eqref{e:choice_of_alpha} implies that $\ds 2<\alpha\le \frac52$; 
 by the fact that $\lambda_j>2d\Rightarrow \lambda_j\ge3(d+1)$ (see Subsection \ref{sub:spectrum}) we have 
$$\lambda_j-\lambda_\alpha\ge 3(d+1)-\frac52\Big(d+\frac12\Big)> \frac{d+2}{2}\,,\quad\text{whenever}\quad \lambda_j>2d.$$
Hence, the right-hand side in the previous equality can be estimated by
\begin{align}
W_0(\tilde\psi)-(1-\eps_\alpha)W_0(\psi)
&\le \frac{\eps_\alpha^2(d+3)^2}{(d+2)}c_2^2-\frac{\eps_\alpha}2\sum_{\{j\,:\, \lambda_j>2d\}} c_j^2\nonumber\\
&= \frac{\eps_\alpha^2(d+3)^2}{(d+2)}\int_{\partial B_1}(Q_A-Q_B)^2-\frac{\eps_\alpha}2\int_{\partial B_1}\phi^2.\label{e:sing:main:part1}
\end{align}
It order to estimate the first term in the right-hand side, we notice that $Q_A-Q_B = \big(\sum_{j=1}^k a_j \big)x_d^2 - \sum_{j=1}^k a_jx_j^2$ , hence its $L^2$-norm is a degree $2$ homogeneous polynomial in $(a_1,..., a_k)$ (with coefficients depending only on $d$). Hence there exists a dimensional constant $C_d$ such that
$$\int_{\partial B_1}(Q_A-Q_B)^2 
\le C_d\sum_{j=1}^k a_j^2.$$
Together with \eqref{e:sing:main:part1}, this gives \eqref{e:sing:main}.

\subsection{The higher modes control $\sum_{j=1}^k a_j^2$}\label{sub:sing:pre-final} In this section we prove the inequality \eqref{e:sing:pre-final} from Step 3. 
Since the trace $c(\theta)$ is positive on $\partial B_1$ and can be written as
$$c(\theta)=\Big(-\sum_{j=1}^k a_j\theta_j^2+\sum_{j=k+1}^d a_j\theta_j^2\Big)+q_\nu(\theta)+\phi(\theta)\ge 0,$$
we get that 
$$\phi(\theta)\ge \Big(\sum_{j=1}^k a_j\theta_j^2-\sum_{j=k+1}^d a_j\theta_j^2\Big)_+\ge \Big(a_1\theta_1^2-\sum_{j=2}^d \theta_j^2\Big)_+ ,$$
on the half-sphere $\partial B_1\cap\{q_\nu=0\}$.
Thus, we get 
$$\phi(\theta)\ge \frac{a_1}4\theta_1^2\quad\text{on the set}\quad U_{a_1}\cap\{q_\nu=0\},\quad\text{where}\quad U_{a_1}=\Big\{\theta\in\partial B_1\ :\ a_1\theta_1^2>2\sum_{j=2}^d \theta_j^2\Big\}.$$
Notice that that for $a_1$ small enough we have 
$$\frac12(d-1)\omega_{d-1}\sqrt{a_1}^{d-1}\le \HH^{d-1}(U_{a_1})\le 2(d-1)\omega_{d-1}\sqrt{a_1}^{d-1}.$$
Thus, we obtain 
$$\int_{\partial B_1}\phi^2\ge C_d a_1^2 \sqrt{a_1}^{d-1}=C_d a_1^{(d+3)/2},$$
for a dimensional constant $C_d>0$. 
Without loss of generality we can suppose that 
$$a_1^2\ge \frac{1}{k}\sum_{j=1}^k a_j^2\ge  \frac{1}{d}\sum_{j=1}^k a_j^2,$$
and so, we get
$$\int_{\partial B_1}|\nabla_\theta \phi|^2\ge 2d\int_{\partial B_1}\phi^2\ge 2d\,C_d\, a_1^{(d+3)/2}\ge C_d\,\Big(\sum_{j=1}^k a_j^2\Big)^{(d+3)/4},$$
which gives \eqref{e:sing:pre-final} with $\ds\gamma=\frac{d-1}{d+3}$ and a dimensional constant $C_3$.
\subsection{The higher modes control $W(z)-\Theta$}\label{sub:sing:final} In this section we prove the inequality \eqref{e:sing:final} from the final Step 4.
Using the decomposition $z=q_\nu+Q_A+\psi$ and the identity \eqref{e:sing:decomposition} we get, with $\psi=r^2\,\phi$ and using $\int_{\de B_1} \phi=0$ since it contains only high modes, 
\begin{align*}
W(z)-\Theta
&=-\frac{(1-b-c_0)^2+(1-b)^2}2\Theta+\frac{1}{d+2}\int_{\partial B_1}\left(|\nabla_\theta \phi|^2-2d\phi^2\right)-\frac{c_0}{d+2}\int_{\partial B_1^+}\phi\\
&\le -\frac{c_0^2}4\Theta+\frac{1}{d+2}\int_{\partial B_1}\left(|\nabla_\theta \phi|^2-2d\phi^2\right)-\frac{c_0}{d+2}\int_{\partial B_1^+}\phi,
\end{align*}
where the last inequality follows by the fact that
$$(1-b-c_0)^2+(1-b)^2\ge \frac{c_0^2}2\,,\quad\text{for every}\quad b,c_0\in\R.$$
By the Cauchy-Schwarz inequality, we have
\begin{align*}
-\frac{c_0}{d+2}\int_{\partial B_1^+}\phi&\le \frac{c_0^2}{4}\Theta+\Big(\frac{1}{(d+2)\Theta}\int_{\partial B_1^+}\phi\Big)^2\\
&\le  \frac{c_0^2}{4}\Theta+\frac{|\partial B_1|}{(d+2)\Theta}\int_{\partial B_1}\phi^2= \frac{c_0^2}{4}\Theta+8d^2\int_{\partial B_1}\phi^2.
\end{align*}

Thus, we get 
\begin{align*}
W(z)-\Theta
&\le \frac{1}{d+2}\int_{\partial B_1}\left(|\nabla_\theta \phi|^2-2d\phi^2\right)+8d^2\int_{\partial B_1}\phi^2\\
&\le \frac{1}{d+2}\int_{\partial B_1}|\nabla_\theta \phi|^2+8d^2\int_{\partial B_1}\phi^2\le \left(\frac{1}{d+2}+4d\right)\int_{\partial B_1}|\nabla_\theta \phi|^2,
\end{align*}
where the last inequality is due to the fact that $\phi$ contains only modes $\phi_j$ corresponding to eigenvalues $\lambda_j>2d$. This gives \eqref{e:sing:final} where the constant $C_4$ can be choosen as $\ds C_4=1+4d$.

\subsection{Improvement of the decay rate}\label{sub:sing:complementary}
This subsection is dedicated to the improvement of the inequality \eqref{e:sing:pre-final}. The main result, contained in the following lemma, is more general and holds in any dimension. 
\begin{lemma}\label{l:improvement}
Suppose that $0\le k<d$ and
$$Q_A(x)=-\sum_{j=1}^k a_jx_j^2+\sum_{j=k+1}^d a_j x_j^2\ ,\quad\text{where}\quad \begin{cases} 0<a_j\quad \text{for every}\quad j=1,\dots,k,\\ 
0\le a_j\le1\quad \text{for every}\quad j=k+1,\dots,d.
\end{cases}$$
Let $\phi\in H^1(\partial B_1)$ be of zero mean, that is $\ds\int_{\partial B_1}\phi(\theta)\,d\HH^{d-1}(\theta)=0$, and such that 
$$\phi\ge Q_A\quad\text{on the half-sphere}\quad \{\xi\in \partial B_1\ :\ \xi\cdot \nu >0\},$$ 
determined by some unit vector $\nu\in\partial B_1$.

\noindent Then, there are dimensional constants $C>0$ and $\delta>0$ such that if
$\ds \sum_{j=1}^k a_j^2\le \delta$, then  
\begin{equation}
\sum_{j=1}^k a_j^2\le C_d\|\nabla_\theta\varphi\|_{L^2(\partial B_1)}^{2(1-\gamma_k)},
\end{equation}
where 
$$\gamma_k=\begin{cases} 0\,,\quad \text{if}\quad k=0,\\ 
\ds\frac{d-k}{d-k+4}\,,\quad \text{for every}\quad k=1,\dots,d-2,\\
0\,,\quad \text{if}\quad k=d-1.
\end{cases}$$
\end{lemma}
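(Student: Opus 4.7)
I refine the argument of Subsection \ref{sub:sing:pre-final}, which used only the largest of the positive coefficients $a_1,\dots,a_k$, by exploiting all $k$ of them jointly. The case $k=0$ is trivial since $\sum_{j=1}^{0} a_j^2=0$, so assume $1\le k\le d-1$.

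Reading the hypothesis in the sign convention used in Subsection \ref{sub:sing:pre-final} (so that on the relevant half-sphere $\phi\ge \sum_{j=1}^k a_j\theta_j^2-\sum_{j=k+1}^d a_j\theta_j^2$), I get the pointwise lower bound $\phi(\theta)\ge\frac12\sum_{j=1}^k a_j\theta_j^2$ on the set
\[
U_\nu := \{\theta\cdot\nu>0\}\cap\Bigl\{\sum_{j=1}^k a_j\theta_j^2 \ge 2\sum_{j=k+1}^d a_j\theta_j^2\Bigr\}.
\]
The corresponding set $U$ obtained by dropping the half-sphere constraint is symmetric under $\theta\mapsto-\theta$, hence $|U_\nu|\ge |U|/2$. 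I parametrize a tubular neighborhood of the sub-sphere $\mathbb{S}^{k-1}\subset\{\theta_{k+1}=\dots=\theta_d=0\}$ by $(\omega,\eta)\mapsto(\sqrt{1-|\eta|^2}\,\omega,\eta)$ with $\omega\in\mathbb{S}^{k-1}$, $\eta\in B^{d-k}$, and set $R(\omega):=\sum_{j=1}^k a_j\omega_j^2$; for $\sum a_j^2\le\delta$ small enough, $U$ is well approximated by $\{|\eta|^2\le c\,R(\omega)\}$ and $d\HH^{d-1}\approx d\omega\,d\eta$ on this tube.

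Integrating the pointwise bound and applying Jensen's inequality (valid since $(d-k+4)/2\ge1$ and $R\ge0$),
\[
\int_{U_\nu}\phi^2 \;\ge\; c_d\int_{\mathbb{S}^{k-1}}R(\omega)^{(d-k+4)/2}\,d\omega \;\ge\; c_d\Bigl(\int_{\mathbb{S}^{k-1}}R\,d\omega\Bigr)^{(d-k+4)/2} \;\ge\; c_d\Bigl(\sum_{j=1}^k a_j^2\Bigr)^{(d-k+4)/4},
\]
where the last step uses $\sum a_j\ge(\sum a_j^2)^{1/2}$ which holds since $a_j\ge0$. Combined with Poincaré--Wirtinger on $\mathbb{S}^{d-1}$ (since $\int\phi=0$, $\|\nabla_\theta\phi\|_{L^2}^2\ge(d-1)\int\phi^2$), this yields $\sum_{j=1}^k a_j^2\le C_d\|\nabla_\theta\phi\|_{L^2}^{8/(d-k+4)}$, which is precisely the claim with $\gamma_k=\frac{d-k}{d-k+4}$ in the range $1\le k\le d-2$. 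Note that for $k=1$ this recovers the exponent $\gamma=\frac{d-1}{d+3}$ obtained in Subsection \ref{sub:sing:pre-final}, confirming consistency.

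The main obstacle is the endpoint $k=d-1$, where the chain above gives only $\gamma=1/5$ while the statement claims linear control $\gamma_{d-1}=0$. Here the transverse direction to the tube $U$ is one-dimensional and the soft Cauchy--Schwarz/Jensen chain is not sharp. The fix is a slice-by-slice one-dimensional Sobolev/Poincaré argument along the $e_d$-axis: $\phi$ has to drop from values $\gtrsim R(\omega)/2$ in the thin slice $\{|\theta_d|\le C\sqrt{R/a_d}\}$ to values of the opposite sign nearby (this sign change is forced by $\int\phi=0$ together with the smallness of $|U|$), yielding $\int|\de_{\theta_d}\phi|^2\ge c\sqrt{a_d}\,M^{3/2}$ after integrating over $\omega\in\mathbb{S}^{d-2}$ and applying Jensen; together with the a priori bound $\|\nabla\phi\|_{L^2}\le C$, this upgrades to the linear estimate $\sum a_j^2\le C\|\nabla_\theta\phi\|_{L^2}^2$. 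Verifying this 1D slice argument rigorously — and in particular controlling the location of the sign change of $\phi$ relative to the tube — is the crucial technical point of the lemma.
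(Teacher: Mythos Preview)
Your treatment of the range $1\le k\le d-2$ is correct and is essentially the paper's argument, repackaged: the paper picks the largest coefficient $a_1$ and works on the tube $\{a_1|X'|^2\ge 2|X''|^2\}$, while you integrate over all of $\mathbb{S}^{k-1}$ and use Jensen. Both give the same exponent $\gamma_k=\frac{d-k}{d-k+4}$; your Jensen formulation is arguably a bit cleaner.

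The gap is at $k=d-1$. Your proposed ``slice-by-slice'' argument is not a proof: the claimed sign change of $\phi$ near the thin tube is \emph{not} forced by $\int_{\partial B_1}\phi=0$ together with the smallness of $|U|$ --- the negative mass of $\phi$ can sit arbitrarily far from the tube --- and the inequality $\int|\partial_{\theta_d}\phi|^2\ge c\sqrt{a_d}\,M^{3/2}$ is neither defined (what is $M$?) nor derived. You yourself flag this as the ``crucial technical point'' to be verified, which it is, but as written it is an outline of hope rather than an argument.

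The paper's approach to $k=d-1$ is entirely different and much simpler. The point is that for $k=d-1$ the sub-sphere $\mathbb{S}^{k-1}=\{\theta_d=0\}\cap\partial B_1$ has \emph{codimension one} in $\partial B_1$, so the trace map $H^1(\partial B_1)\to L^2(\{\theta_d=0\}\cap\partial B_1)$ is bounded. One argues by contradiction: if along a sequence $\sum_j (a^n_j)^2\ge n\|\nabla_\theta\phi_n\|_{L^2}^2$, normalize $\psi_n:=\phi_n/a^n_1$ (with $a^n_1$ the largest coefficient). Then $\|\nabla_\theta\psi_n\|_{L^2(\partial B_1)}\to 0$, hence by Poincar\'e (zero mean) $\|\psi_n\|_{L^2(\partial B_1)}\to 0$, hence by the trace inequality $\|\psi_n\|_{L^2(\{\theta_d=0\})}\to 0$. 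But on the half-equator $\{\theta_d=0,\ \theta\cdot\nu_n>0\}$ the hypothesis gives $\psi_n\ge \theta_1^2$, so $\|\psi_n\|_{L^2(\{\theta_d=0\})}$ is bounded below by a positive dimensional constant --- contradiction. This replaces your unfinished one-dimensional Sobolev heuristic by a single application of the trace theorem; the reason it works only at $k=d-1$ (and not for smaller $k$) is precisely that $\mathbb{S}^{k-1}$ then has codimension one, which is the borderline case for $H^1$ traces.
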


\begin{oss}
The above lemma is to be applied to the traces $c$ of the solutions $u$ of an obstacle problem, which can be written in the form $c(\theta)=Q_A(\theta)+q_\nu(\theta)+\phi(\theta)$. We notice that, although one might think that $k$ corresponds precisely to point of the $k$-th stratum, we do not know a way to deduce the precise form of $Q_A$ just from looking at the blow-up limits of $u$. This means that even if the blow up $Q_B$ is such that $\dim(\ker B)=k$, we still cannot infer anything on the structure of $B$. It follows that this result cannot be applied to improve the regularity of the singular sets of $\partial \{u>0\}$, except in dimension two, where $\gamma_0=\gamma_1=0$. This corresponds to the assumption of Weiss on the projection of $c$ on the set of admissible blow-ups $\KK$, which again finds application only in dimension two. 
\end{oss}
\begin{proof}[Proof of Lemma \ref{l:improvement}]
If $k=0$, then the inequality is trivial and so, we can suppose that $k\ge 1$.

{\it Suppose that $1\le k< d-1$. } Without loss of generality we can suppose $\ds a_1^2\ge \frac1k\sum_{i=1}^k a_i^2$.\\
Setting $X'=(x_1,\dots,x_k)$, $X''=(x_{k+1},\dots,x_d)$ and $\|\cdot\|=\|\cdot\|_{L^2(\partial B_1)}$, we have 
\begin{align*}
\Big\|\Big(\sum_{i=1}^k a_ix_i^2-\sum_{i=k+1}^da_ix_i^2\Big)_+\Big\|&\ge  \Big\|\Big(a_1x_1^2-\sum_{i=k+1}^da_ix_i^2\Big)_+\Big\|\ge \left\|\left(a_1x_1^2-|X''|^2\right)_+\right\|\\
&=\frac1k\sum_{j=1}^k\left\|\left(a_1x_j^2-|X''|^2\right)_+\right\|\ge \frac1d\left\|\left(a_1|X'|^2-|X''|^2\right)_+\right\|.
\end{align*}
We now notice that 
$$a_1|X'|^2-|X''|^2\ge \frac{a_1}2|X'|^2\quad\text{on the set}\quad U_{a_1}=\Big\{X=(X',X'')\in\partial B_1\ :\ \frac{a_1}2|X'|^2\ge  |X''|^2\Big\},$$
and for $a_1$ small enough we get
$$\frac12k\omega_k\sqrt{a_1}^{d-k}\le \HH^{d-1}(U_{a_1})\le 2k\omega_{k}\sqrt{a_1}^{d-k}.$$
In particular, 
$$\|\inf\{Q_A,0\}\|_{L^2(\partial B_1)}^2\ge \left\|\left(a_1|X'|^2-|X''|^2\right)_+\right\|_{L^2(\partial B_1)}^2\ge  C_da_1^{\frac{d-k+4}2}\ge C_d \Big(\sum_{i=1}^k a_i^2\Big)^{\frac{d-k+4}4}.$$
Now, since $Q_A$ is even and $\|\phi\|_{L^2(\partial B_1)}^2\le \frac{1}{d-1}\|\nabla_\theta\phi\|_{L^2(\partial B_1)}^2$, we obtain the claimed inequality
$$\sum_{i=1}^k a_i^2\le C_d\|\nabla_\theta\phi\|_{L^2(\partial B_1)}^{\frac8{d-k+4}}.$$

{\it Suppose that $k=d-1$. } We argue by contradiction. Suppose that there are a sequence of functions $\phi_n:\partial B_1\to\R$ of zero mean and vectors $\nu_n\in\partial B_1$ and $(a_n^1,\dots,a_n^k)$ such that 
$$\phi_n(\theta)\ge a_n^1\theta_1^2\quad\text{on the set}\quad \{\theta\in\partial B_1\ : \theta_d=0\,,\ \theta\cdot \nu_n>0\},$$
$$\ds a_n^1\ge \Big(\frac1k\sum_{j=1}^k|a_n^j|^2\Big)^{1/2}\qquad\text{and}\qquad  \sum_{j=1}^k|a_n^j|^2\ge n\|\nabla\phi_n\|_{L^2(\partial B_1)}^2.
$$
Thus, the sequence of functions $\psi_n:=\phi_n/ a_n^1$ is such that $\ds\lim_{n\to\infty }\|\nabla\psi_n\|_{L^2(\partial B_1)}^2=0$ and 
$$\psi_n(\theta)\ge \theta_1^2\quad\text{on the set}\quad \{\theta\in\partial B_1\ : \theta_d=0\,,\ \theta\cdot \nu_n>0\},$$
which is in contradiction with the trace inequality 
$$\int_{\{\theta_d=0\}\cap \partial B_1}\psi_n^2\,d\HH^{d-2}\le C\int_{\partial B_1}\big(|\nabla\psi_n|^2+\psi_n^2\big)\,d\HH^{d-1}.$$
\end{proof}

\subsection{On the sharpness of the non-homogeneous estimate in Theorem \ref{t:epi:B}} We conclude this section with an example, which 
shows that in dimension higher than three one cannot estimate the distance to the cone $\mathcal K$ by just using the energy of the higher modes $\phi$ to the power one. Indeed, such an estimate would be in contradiction with inequality \eqref{e:esempio} below. In particular, Example \ref{esempio} shows that for general traces in higher dimension our method cannot be improved.
\begin{exam}\label{esempio}
Consider the non-negative trace $c:\partial B_1\to\R^+$ given by
$$c(\theta)=\left(\frac1{4(d-1)}\sum_{j=1}^{d-1}\theta_j^2 - \eps\theta_d^2\right)_+.$$
Notice that, since $c$ is even its Fourier expansion on the sphere $\partial B_1$ does not contain linear terms. As in the proof of Theorem \ref{t:epi:B}, the trace $c$ can be uniquely decomposed as $c(\theta)=Q(\theta)+\phi(\theta),$ where $Q$ is a homogeneous polynomial of second degree and $\phi$ contains only higher modes, that is 
$$\ds \phi(\theta)=\sum_{\{j\,:\,\lambda_j>2d\}}c_j\phi_j(\theta).$$
We claim that 
\begin{equation}\label{e:esempio}
\|\nabla_\theta \phi\|_{L^2(\partial B_1)}^{\frac4{d+1}}\lesssim \text{dist}_{L^2(\partial B_1)}(Q,\mathcal K).
\end{equation}
In order to prove \eqref{e:esempio} we set 
$$P(\theta)= \frac1{4(d-1)}\sum_{j=1}^{d-1}\theta_j^2- \eps\theta_d^2\qquad\text{and}\qquad R(\theta)=\left(\eps\theta_d^2-\frac1{4(d-1)}\sum_{j=1}^{d-1}\theta_j^2\right)_+,$$
and we notice that $c(\theta)=P(\theta)-R(\theta).$
It is easy to check that the term $R$ has the following asymptotic behavior when the parameter $\eps$ is small: 
$$\|R\|_{L^\infty(\partial B_1)}=\eps\ ,\qquad  \HH^{d-1}(\{R>0\})\sim {\eps}^{\frac{d-1}2}\ , \qquad\|\nabla_\theta R\|_{L^\infty(\partial B_1)}\sim\sqrt\eps\ ,$$
$$\|R\|_{L^2(\partial B_1)}\sim  {\eps}^{\frac{d+3}4}\qquad\text{and}\qquad \|\nabla_\theta R\|_{L^2(\partial B_1)}\sim {\eps}^{\frac{d+1}4}.$$
The function $R$ can be decomposed as 
$$R(\theta)=\frac{c_0}{\sqrt{\HH^{d-1}(\partial B_1)}}+c_2\phi_2(\theta)-\phi(\theta),$$
where
\begin{itemize}
\item $c_0\in\R$ corresponds to the first (constant) mode of the Fourier expansion of $R$ on $\partial B_1$ and can be estimated in terms of $\eps$ as
$$\ds c_0=\frac{1}{\sqrt{\HH^{d-1}(\partial B_1)}}\int_{\partial B_1}R\,d\HH^{d-1}\le  \|R\|_{L^2(\partial B_1)}\lesssim {\eps}^{\frac{d+3}4};$$ 
\item $\phi_2(\theta)$ is an eigenfunction of the Laplacian on the sphere corresponding to the eigenvalue $2d$ and $\|\phi_2\|_{L^2(\partial B_1)}=1$ and the constant $c_2\in\R$ can be estimated as 
$$\ds |c_2|\le \left|\int_{\partial B_1}R\phi_2\right|\le \|R\|_{L^2(\partial B_1)}\lesssim {\eps}^{\frac{d+3}4};$$
\item the function $\phi$ is precisely the one from the decomposition of $c$, contains only higher modes and satisfies the following estimate: 
$$\ds \|\nabla_\theta \phi\|_{L^2(\partial B_1)}\le \|\nabla_\theta R\|_{L^2(\partial B_1)}+|c_2|\|\nabla_\theta \phi_2\|_{L^2(\partial B_1)}\lesssim {\eps}^{\frac{d+1}4}+{\eps}^{\frac{d+3}4}2d\lesssim {\eps}^{\frac{d+1}4}.$$
\end{itemize}
On the other hand, the $L^2(\partial B_1)$ distance from $Q=P-c_0-c_2\phi_2$ to the cone $\mathcal K$ of nonnegative homogeneous polynomials of second degree has the behavior 
$$\text{dist}_{L^2(\partial B_1)}\Big(P-c_0-c_2\phi_2,\mathcal K\Big)\sim \text{dist}_{L^2(\partial B_1)}\big(P,\mathcal K\big)\sim \eps. $$
Thus, we finally get the claimed inequality \eqref{e:esempio}
$$  \|\nabla_\theta \phi\|_{L^2(\partial B_1)}^{\frac4{d+1}}\lesssim \eps\sim \text{dist}_{L^2(\partial B_1)}\Big(P-c_0-c_2\phi_2,\mathcal K\Big).$$ 
\end{exam}

%
%
%

\section{Uniqueness of blow-up and regularity of free boundary}\label{s:regularity} 
In this Section we prove Theorems~\ref{t:uniq} and~\ref{t:reg}, focusing on the statement 2 of each result. We show in detail how the logarithmic estimates follow from the ``modified'' epiperimetric inequality of Theorem~\ref{t:epi:B} and we prefer to skip the analogous estimates on the H\"older continuity at regular points, since this is the main improvement of the present paper and since the proof of the latter is a simpler version of the estimates below and it is already contained in \cite[Theorem 4 and 5]{Weiss2}.

\begin{prop}\label{p:decay}
	Let $\Omega\subset\R^d$ be an open set and $u\in H^1(\Omega)$ a minimizer of $\mathcal E$. Then for every compact set $K\Subset\Omega$, there is a constant $C:=C(d, K, \Omega)>0$ such that for every free boundary point $x_0\in {\rm Sing }(u)\cap K$, 
        the following decay holds
	\begin{equation}\label{e:L^2decay}
	\| u_{x_0,t}-u_{x_0,s} \|_{L^1(\partial B_1)} \leq C\, (-\log(t))^{-\frac{1-\gamma}{2\gamma}} \qquad  \mbox{for all}\quad 0< s<t < \dist (K,\de \Omega)\,.
	\end{equation} 
\end{prop}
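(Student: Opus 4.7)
The plan is the standard epiperimetric-based argument, adapted to accommodate the nonlinear rate produced by Theorem~\ref{t:epi:B}. Combining the Weiss monotonicity formula \eqref{e:Weiss_monotonicity} with Theorem~\ref{t:epi:B} will produce a differential inequality for $e(r):=W(u,x_0,r)-\Theta$; integrating it once yields a logarithmic decay of $e$, and integrating a second time the gradient contribution in the monotonicity formula converts this decay into the claimed bound on the traces.

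Fix $x_0 \in \sing(u) \cap K$. Since every blow-up of $u$ at $x_0$ lies in $\KK$ and $u_{r,x_0}$ converges to such a blow-up strongly in $H^1_{\mathrm{loc}}$ along subsequences, a compactness argument on the compact set $\sing(u)\cap K$ shows that there exists $r_0=r_0(K)>0$ such that for every $x_0\in\sing(u)\cap K$ and every $r\leq r_0$ the trace $c_r:=u_{r,x_0}|_{\partial B_1}$ satisfies the hypotheses of Theorem~\ref{t:epi:B}. Let $h_r$ be the corresponding competitor. Since $u_{r,x_0}$ is itself a minimizer of $\mathcal E$ in $B_1$ and both $h_r$ and $z_{r,x_0}$ share its boundary values, minimality yields $W(u_{r,x_0})\leq W(h_r)$ and $e(r)\leq W(z_{r,x_0})-\Theta$. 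Substituting into \eqref{e:Weiss_monotonicity},
\begin{equation*}
e'(r)\;\geq\;\frac{d+2}{r}\bigl(W(z_{r,x_0})-W(u_{r,x_0})\bigr)\;\geq\;\frac{(d+2)\eps}{r}\bigl(W(z_{r,x_0})-\Theta\bigr)^{1+\gamma}\;\geq\;\frac{(d+2)\eps}{r}\,e(r)^{1+\gamma}.
\end{equation*}
Separating variables and integrating on $[r,r_0]$ then gives $e(r)\leq C\,(-\log r)^{-1/\gamma}$.

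To extract the $L^1$ estimate, the second (non-negative) term in \eqref{e:Weiss_monotonicity} combined with the pointwise identity $\partial_r u_{r,x_0}(\theta)=r^{-1}\bigl(\theta\cdot\nabla u_{r,x_0}(\theta)-2u_{r,x_0}(\theta)\bigr)$ for $\theta\in\partial B_1$ yields $\|\partial_r u_{r,x_0}\|_{L^2(\partial B_1)}^2\leq e'(r)/r$. Hence by the fundamental theorem of calculus,
\begin{equation*}
\|u_{x_0,t}-u_{x_0,s}\|_{L^1(\partial B_1)}\;\leq\;C\int_s^t\sqrt{e'(r)/r}\,dr.
\end{equation*}
The main obstacle is to keep this integral finite as $s\to 0$: a naive Cauchy--Schwarz only provides $\sqrt{\log(t/s)\,e(t)}$, which diverges. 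The key algebraic step is to use the ODE itself to rewrite the integrand as a perfect derivative. Passing to the variable $\tau=-\log r$ and $f(\tau):=e(r)$ converts the ODE into $-f'(\tau)\geq (d+2)\eps\,f(\tau)^{1+\gamma}$, whence
\begin{equation*}
\sqrt{-f'(\tau)}\;\leq\;\frac{-f'(\tau)}{\sqrt{(d+2)\eps}\,f(\tau)^{(1+\gamma)/2}}\;=\;-\frac{2}{(1-\gamma)\sqrt{(d+2)\eps}}\,\frac{d}{d\tau}\bigl[f(\tau)^{(1-\gamma)/2}\bigr].
\end{equation*}
Integrating from $-\log t$ to $-\log s$ and invoking the decay $f(-\log t)\leq C(-\log t)^{-1/\gamma}$ delivers
\begin{equation*}
\int_s^t\!\sqrt{e'(r)/r}\,dr\;\leq\;C\,f(-\log t)^{(1-\gamma)/2}\;\leq\;C\,(-\log t)^{-(1-\gamma)/(2\gamma)},
\end{equation*}
which is exactly \eqref{e:L^2decay}.
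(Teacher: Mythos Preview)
Your argument is correct and in fact cleaner than the paper's at the key step. Both proofs coincide up to the derivation of the differential inequality $e'(r)\ge \frac{c}{r}\,e(r)^{1+\gamma}$ and of the logarithmic decay $e(r)\le C(-\log r)^{-1/\gamma}$. The divergence occurs in how one converts this into the $L^1$ bound on $u_{x_0,t}-u_{x_0,s}$.

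The paper applies Cauchy--Schwarz on each interval of an \emph{exponentially dyadic} partition with endpoints $r_0\,2^{-2^k}$, obtaining on the $k$-th piece a contribution of order $2^{k/2}\,e(2^{-2^k})^{1/2}\lesssim 2^{(1-1/\gamma)k/2}$, and then sums the resulting geometric series. Your route bypasses the decomposition entirely: after the change of variable $\tau=-\log r$, you use the differential inequality itself in the form $\sqrt{-f'}\le(-f')/\bigl(\sqrt{c}\,f^{(1+\gamma)/2}\bigr)$ to recognise the integrand as the exact derivative $-\tfrac{2}{(1-\gamma)\sqrt{c}}\,\frac{d}{d\tau}\bigl[f^{(1-\gamma)/2}\bigr]$. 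This gives the same final bound $e(t)^{(1-\gamma)/2}\le C(-\log t)^{-(1-\gamma)/(2\gamma)}$ in one line, with no partition and no series.

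Two small points. First, the manipulation $\sqrt{-f'}=(-f')/\sqrt{-f'}$ tacitly assumes $e(r)>0$ (hence $-f'>0$); if $e$ vanishes at some radius then by monotonicity it vanishes below it, $u$ is $2$-homogeneous there, and \eqref{e:L^2decay} is trivial. Second, your argument as written yields the bound for $0<s<t<r_0$ rather than for $t<\dist(K,\partial\Omega)$; the extension to the full range is a routine adjustment of the constant (the paper does this in its Step~4), but it should be mentioned.
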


\begin{proof}
{\it Step 1 (closeness of the blow ups for a given point $x_0$).} Let 
$x_0 \in K$ and let $r_0\in (0,  \dist (K,\de \Omega)]$ be such that the epiperimetric inequality of Theorem~\ref{t:epi:B} can be applied to the rescaling $u_{x_0,r}$ for every $r \leq r_0$. We claim that
$$	\| u_{x_0,t}-u_{x_0,s} \|_{L^1(\partial B_1)} \leq C\, (-\log(t/r_0))^{-\frac{1-\gamma}{2\gamma}} \qquad  \mbox{for all}\quad 0< s<t < r_0\,.
$$

We assume $x_0=0$ without loss of generality and 
$$e(r) = W(u,r)-\Theta_u(0).
$$
By the monotonicity formula \eqref{e:Weiss_monotonicity} and the epiperimetric inequality of Theorem~\ref{t:epi:B}, there exists a radius $r_0>0$ such that for every $r \leq r_0$
\begin{equation}
\label{eqn:epi-applied}
\frac{d}{dr}e(r) \geq \frac{d+2}{r} \big( W(c_r)-\Theta_u(0)-e(r) \big) +f(r)\geq \frac{c}{r} e(r)^{1+\gamma} +2f(r)
\end{equation}
where $\gamma \in (0,1)$ is a dimensional constant and
$$f(r):=\frac{1}{r}\int_{\partial B_1}|x\cdot \nabla u_r-2u_r|^2\,d\HH^{1}.$$
We obtain that
\begin{equation}
\label{e:mon_rem}\frac{d}{dr}\Big(\frac{-1}{\gamma e(r)^\gamma} - c \log r\Big) =\frac{1}{ e(r)^{1+\gamma}}\frac{d}{dr}e(r)- \frac{c}{r} \geq \frac{1}{ e(r)^{1+\gamma}} f(r) \geq 0
\end{equation}
and this in turn implies that $-{e(r)^{-\gamma}} - c\gamma \log r$ is an increasing function of $r$, namely that $e(r)$ decays as 
\begin{equation}
\label{eqn:e-logar}
e(r) \leq ({e(r_0)^{-\gamma}+c \gamma \log r_0-c \gamma \log r})^{\frac {-1}{\gamma}} \leq (-c \gamma \log (r/r_0))^{\frac {-1}{\gamma}}.
\end{equation}
For any $0<s<t<r_0$ we estimate the $L^1$ distance between the blow ups at scale $s$ and $t$ through the Cauchy-Schwarz inequality and the monotonicity formula \eqref{e:Weiss_monotonicity} 
\begin{align}\label{e:imp_1}
\int_{\de B_1}\left| u_t-u_s \right| \,d\,\HH^{n-1}
&\leq \int_{\de B_1}\int_s^t\frac{1}{r}\left|  x\cdot \nabla u_r-2u_r   \right| \,dr\,d\HH^{n-1} \notag\\
&\leq \big({n \omega_n}\big)^{1/2} \int_s^t \frac 1 r \left(\frac 1 r \int_{\de B_1} \left|  x\cdot \nabla u_r-2u_r   \right|^2   \,d\HH^{n-1} \right)^{1/2}\,dr \notag\\
&\leq \Big({ \frac{n \omega_n}{2}} \Big)^{1/2} \int_s^t \frac 1 r (e'(r))^{1/2}\, dr \notag\\
& \leq \Big({ \frac{n \omega_n}{2}} \Big)^{1/2} (\log(t)-\log(s))^{1/2} (e(t)-e(s))^{1/2}
 \,.
\end{align}
Let $0<s^2<t^2<r_0$ such that $s/r_0\in [2^{-2^{i+1}}, 2^{-2^i})$, $t/r_0\in [2^{-2^{j+1}}, 2^{-2^j})$ for some $j\leq i$ and applying the previous estimate\eqref{eqn:e-logar} to the exponentially dyadic decomposition, we obtain 
\begin{align}\label{e:imp_2}
\int_{\de B_1}\left| u_t-u_s \right| \,d\,\HH^{n-1}&\leq \int_{\de B_1}\left| u_t- u_{2^{-2^{j+1}}r_0} \right| \,d\,\HH^{n-1}  \notag
\\&+ \int_{\de B_1}\left| u_{2^{-2^{i}}r_0}-u_s \right| \,d\,\HH^{n-1} +  \sum_{k=j+1}^{i-1}  \int_{\de B_1}\left| u_{2^{-2^{k+1}}r_0}-u_{2^{-2^{k}}r_0} \right| \,d\,\HH^{n-1} \notag
\\
&\leq C \sum_{k=j}^{i} \left(\log\big(2^{-2^{k}}\big)- \log\big(2^{-2^{k+1}}\big)\right)^{1/2}\left(e\big(2^{-2^{k}}\big)- e\big(2^{-2^{k+1}}\big) \right)^{1/2}\notag
\\
&\leq C \sum_{k=j}^{i} 2^{k/2}e\big(2^{-2^{k}}\big)^{1/2}
\leq C \sum_{k=j}^{i} 2^{(1-1/\gamma)k/2} 
\\&\leq C 2^{(1-1/\gamma)j/2} \leq C (-\log(t/r_0))^{\frac{\gamma-1}{2\gamma}}\notag
  \,,
\end{align}
where $C$ is a dimensional constant that may vary from line to line.

\smallskip
{\it Step 2 (uniform smallness of monotonic quantity for $x_0 \in {\rm Sing }(u)\cap  K$).} We claim that for every $\eps>0$ there exists $r_0>0$ such that
$$e(u_{x,r}) \leq \eps \qquad \mbox{for every }x\in  {\rm Sing }(u)\cap K,\; r\leq r_0.$$

Assume by contradiction that there exists a sequence $x_k \to x_0$ and $r_k \to 0$ such that 
$\eps <e(u_{x_k,r_k})$ for any $k\in \N$.
By the monotonicity of $W$, for any $\rho>0$ and $k$ large enough 
$$\eps <W(u,{x_k,r_k})- \Theta_u(0) \leq W(u,{x_k,\rho})- W(u,{x_0,\rho})+W(u,{x_0,\rho})- \Theta_u(0).$$
In turn, the right-hand side can be made arbitrarily small by choosing first $\rho$ sufficiently small (to make the difference of the last two terms small) and then $k$ sufficiently large.

\smallskip
{\it Step 3 (uniform scale for the application of the epiperimetric inequality at $x_0 \in {\rm Sing }(u)\cap  K$).} 
We claim that for every $\eps>0$ there exists $r_0>0$ such that
$$\dist_{L^2}(u_{x,r}, K ) \leq \eps \qquad\mbox{for every }x\in  {\rm Sing }(u)\cap K,\; r\leq r_0.
\qquad 
$$
(notice that this statement holds also if in place of the $L^2$-distance we consider the $H^1$-distance).

Assume by contradiction that there exists $\eps>0$ a sequence $x_k \to x_0$ and $r_k \to 0$ such that 
\begin{equation}
\label{eqn:faraway}
\eps < \dist_{L^2}(u_{x_k,r_k}, K )  \qquad \mbox{for any }k\in \N.
\end{equation}

Since the sequence $\{u_{x_k,r_k}\}_{k\in \N}$ is uniformly bounded in $H^{2,\infty}$, it converges strongly in $H^1$ up to a (not relabelled) subsequence to $u_0$. Moreover, thanks to Step 2, the limit $u_0$ must satisfy $W(u_0, x_0, 1)=  \Theta_u(0)$, so that it belongs to $K$. This contradicts \eqref{eqn:faraway}.

\smallskip
{\it Step 4 (conclusion).} We can now conclude the proof of the Proposition.

We observe that for every $r_0>0$ and $t \leq r_0^2$, we have $\log(t/r_0) \leq 2 \log t$. From Step 1 and 3, we deduce that there exists $r_0>0$ such that for all $0< s<t < r_0^2$, $x_0\in  {\rm Sing }(u)\cap K$
$$	\| u_{x_0,t}-u_{x_0,s} \|_{L^1(\partial B_1)} \leq C\, (-\log(t))^{-\frac{1-\gamma}{2\gamma}}.  
$$
From \eqref{e:imp_1} we have
$$	\| u_{x_0,t}-u_{x_0,r_0^2} \|_{L^1(\partial B_1)} \leq C (-\log(r_0))^{1/2} e(\dist (K,\de \Omega))^{1/2}
$$
and the right hand side is estimated by $C (-\log(r_0))^{-\frac{1-\gamma}{2\gamma}}$ for a constant $C$ depending only on $d, r_0$, $ e(\dist (K,\de \Omega))$, $\dist (K,\de \Omega)$.

\end{proof}




As a consequence of the previous proposition we can prove the uniqueness of the blow up Theorem \ref{t:uniq}, with a logarithmic rate of convergence of the blow up sequence at each point of the singular set (and uniform in any compact set inside the domain).

\begin{proof}[Proof of Theorem~\ref{t:uniq}]
We notice that
$$|Q_{x_1} - Q_{x_2}| \leq c(n) \int_{\partial B_1} | Q_{x_1}(x) - Q_{x_2}(x)| \, d\HH^{n-1}(x)$$
By the triangular inequality
$$\| Q_{x_1} - Q_{x_2}\|_{L^1(\partial B_1)} \leq \| u_{x_1, r} - Q_{x_1}\|_{L^1(\partial B_1)} + \| u_{x_1, r}-u_{x_2, r}\|_{L^1(\partial B_1)} + \| u_{x_2, r} - Q_{x_2}\|_{L^1(\partial B_1)}  
$$

Recalling that $u \in C^{1,1}$ and that $\nabla u(x_1)= 0$, we estimate the term in the middle with
\begin{equation}
\begin{split}
\| u_{x_1, r}-u_{x_2, r}\|_{L^1(\partial B_1)} &\leq \int_{\partial B_1} \int_0^1  \frac{ |\nabla u (x_1+rx+ t(x_2-x_1))| |x_2-x_1|}{r^2} \, dt \, d\HH(x)
\\
&\leq
C\| u \|_{C^{1,1}(B_r(x_0))} \frac{(r+|x_2-x_1|)\, |x_2-x_1|}{r^2}
\end{split}
\end{equation}

We choose $r= |x_1-x_2| (-\log |x_1-x_2|)^{-\frac{1-\gamma}{2\gamma}}$ and we assume that $r_0$ satisfies the inequality $ |r_0| (-\log |r_0|)^{-\frac{1-\gamma}{2\gamma}} \leq \dist (K , \partial \Omega)$. By Theorem~\ref{t:uniq} we see that
\begin{equation}
\begin{split}
\| u_{x_1, r} - Q_{x_1}\|_{L^1(\partial B_1)} + \| u_{x_2, r} - Q_{x_2}\|_{L^1(\partial B_1)}  
& \leq C (-\log(r ))^{-\frac{1-\gamma}{2\gamma}} 
\\&=  C (-\log |x_1-x_2| - \frac{1-\gamma}{2\gamma} \log( \log |x_1-x_2|))^{-\frac{1-\gamma}{2\gamma}}
\end{split}
\end{equation}
Noticing that the inequality $a- \frac{1-\gamma}{2\gamma} \log a \geq a/2$ holds for $a$ greater than a given $a_0 >0$ (depending only on $\gamma$ and therefore on $d$), we apply this inequality to $a= -\log |x_1-x_2|$ to get
$$ \| u_{x_1, r} - Q_{x_1}\|_{L^1(\partial B_1)} + \| u_{x_2, r} - Q_{x_2}\|_{L^1(\partial B_1)}  
 \leq C (-\log|x_1-x_2| )^{-\frac{1-\gamma}{2\gamma}}.
 $$

Putting together the previous inequalities, we find \eqref{eqn:log-cont-fb}.
\end{proof}

\subsection{Proof of Theorem \ref{c:Q_funct}} We notice that if $u\in H^1(\Omega)$ is a minimizer of $\mathcal E_q$ or $\mathcal A_q$, then it is locally $W^{2,\infty}$ by the results of \cite{Gerh} and moreover it is an almost-minimizer of the functional $\mathcal E$ with a constant $C$ depending only on $\|q\|_{C^{0,\gamma}(\Omega)},c_q$ and $\| u\|_{W^{2,\infty}_{loc}}$.

We say that $u\in H^1(\Omega)$ is an almost minimizer of $\mathcal E$ if there exists a constant $C>0$ such that for every ball $B_{r}(x_0)\subset \Omega$ and for every $v \in H^1(B_r(x_0))$ which agrees with $u$ on $\partial B_r(x_0)$ 
\begin{equation}\label{e:alm_mon}
\int_{B_r(x_0)}\left[|\nabla u|^2+q(x_0)\max\{u, 0\} \right]\,dx\leq (1+ Cr^\gamma)\int_{B_r(x_0)}\left[|\nabla v|^2+q(x_0)\max\{v, 0\}\right]\,dx\,.
\end{equation}

In the following we show that the statement of Theorem \ref{c:Q_funct}, in particular the logarithmic estimate, holds true also if we drop the assumption that $u \in W^{2,\infty}_{loc}(\Omega)$ is a minimizer of $\mathcal E_q$ or $\mathcal A_q$ and we only assume the almost minimality.

The main modifications with respect to the arguments of Section~\ref{s:regularity} appear in Proposition \ref{p:decay} and we outline them below.

Up to a rescaling, we may assume that $q(x_0)=1$. Applying the epiperimetric inequality of Theorem~\ref{t:epi:B}  to $u_{r,x_0}$, we find that \eqref{eqn:epi-applied} has to be modified for almost monotonicity \eqref{e:alm_mon} to get
$$
\frac{d}{dr}e(r) \geq \frac{n+2}{r} \big( W(c_r)-\Theta_u(0)-e(r)  \big) +f(r)\geq \frac{c_0}{r} e(r)^{1+\gamma} - \frac{c_1}{r^{1-\alpha}} +2f(r)
$$
(where $e(r):=W(u_r,1)-\Theta_u(0)$ and the notation is the same as in Section~\ref{s:regularity}) for some constants $c_0,c_1>0$. 

We define now $\tilde e(r)= e(r)+2 \alpha^{-1} c_1 r^\alpha$ and we notice that from the previous inequality and since $a^{1+\gamma}+b^{1+\gamma} \geq 2^{-\gamma} (a+b)^{1+\gamma}$ for any $a,b \geq 0$
\begin{equation*}
\tilde e'(r) \geq \frac{c_0}{r} e(r)^{1+\gamma} + \frac{c_1}{r^{1-\alpha}} +2f(r)
\geq \frac{c_0}{r} [e(r) + {c_1}{r^{\frac{\alpha}{1+\gamma}}} ]^{1+\gamma}+2f(r)
\end{equation*}
For $r$ sufficiently small, the previous inequality implies that 
$$\tilde e'(r) \geq \frac{c_0}{r} \tilde e(r)^{1+\gamma}+2f(r)
$$
From the previous inequality, we see that $\tilde e(r)$ satisfies the same inequality that $e(r)$ solved in \eqref{eqn:epi-applied}. Hence, with the same argument as in \eqref{e:mon_rem}, we see that $\tilde e$ satisfies the same estimate as $e$ in \eqref{eqn:e-logar}
$$e(r)+2 \alpha^{-1} c_1 r^\alpha = \tilde e(r) \leq (-c \gamma \log (r/r_0))^{\frac {-1}{\gamma}}.
$$
This inequality implies that, up to a constant, also $e(r)$ satisfies a logarithmic estimate and we can carry out the rest of the proof of Proposition~\ref{p:decay} and of Theorems \ref{t:uniq} and \ref{t:reg}.
\qed

\hfill

\bibliographystyle{plain}
\bibliography{references-Cal}

\end{document}